\documentclass[noinfoline]{imsart}
\usepackage[utf8]{inputenc}
\usepackage{natbib}
\usepackage{eurosym}
\usepackage{amstext,amsthm,amsmath}
\usepackage{color}
\usepackage{amssymb,mathtools}
\usepackage{mathrsfs}
\usepackage{graphicx}
\usepackage{enumerate,paralist}
\usepackage[normalem]{ulem}
\usepackage{tikz}

\newcommand{\stirl}[2]{
  \left\{ {#1 \atop #2} \right\}
}

\newtheorem{proposition}{Proposition}[section]
\newtheorem{corollary}[proposition]{Corollary}
\newtheorem{theorem}{Theorem}
\newtheorem{lemma}[proposition]{Lemma}
\theoremstyle{definition}
\newtheorem{definition}[proposition]{Definition}
\newtheorem{remark}[proposition]{Remark}

\pagestyle{headings}

\numberwithin{equation}{section}

\makeatletter
\renewcommand{\@fnsymbol}[1]{\@arabic{#1 }}
\makeatother

\arxiv{math.PR/0000.0000}

\begin{document}
  \title{Large-scale behavior of the partial duplication random graph}
  \runtitle{The partial duplication random graph}
  
  \begin{aug}
    \author{\fnms{Felix} \snm{Hermann}\ead[label=e1]{felix.hermann@stochastik.uni-freiburg.de}},
    \author{\fnms{Peter}
      \snm{Pfaffelhuber}\ead[label=e2]{p.p@stochastik.uni-freiburg.de}
      \ead[label=u2,url]{http://www.stochastik.uni-freiburg.de/homepages/pfaffelh/}}
  
  \runauthor{Hermann, Pfaffelhuber}

  \affiliation{University
    of Freiburg\thanksmark{m2}$\!\!$}

  \address{Abteilung f{\"u}r Mathematische Stochastik\\ Albert-Ludwigs
    University of Freiburg\\ Eckerstr. 1\\ 79104 Freiburg\\ Germany\\
    \printead{e1}\\
    \printead{e2}\\
    \printead{u2}  
    % \bigskip
  }
\end{aug}

\begin{abstract}
  \noindent
  The following random graph model was introduced for the evolution of
  protein-protein interaction networks: Let
  $\mathcal G = (G_n)_{n=n_0, n_0+1,...}$ be a sequence of random
  graphs, where $G_n = (V_n, E_n)$ is a graph with $|V_n|=n$ vertices,
  $n=n_0,n_0+1,...$ In state $G_n = (V_n, E_n)$, a vertex $v\in V_n$
  is chosen from $V_n$ uniformly at random and is partially
  duplicated. Upon such an event, a new vertex $v'\notin V_n$ is
  created and every edge $\{v,w\} \in E_n$ is copied with
  probability~$p$, i.e.\ $E_{n+1}$ has an edge $\{v',w\}$ with
  probability~$p$, independently of all other edges.

  Within this graph, we study several aspects for large~$n$. (i) The
  frequency of isolated vertices converges to~1 if
  $p\leq p^* \approx 0.567143$, the unique solution of $pe^p=1$. (ii)
  The number $C_k$ of $k$-cliques behaves like $n^{kp^{k-1}}$ in the
  sense that $n^{-kp^{k-1}}C_k$ converges against a non-trivial limit,
  if the starting graph has at least one $k$-clique. In particular,
  the average degree of a vertex (which equals the number of edges --
  or 2-cliques -- divided by the size of the graph) converges to $0$
  iff $p<0.5$ and we obtain that the transitivity ratio of the random
  graph is of the order $n^{-2p(1-p)}$. (iii) The evolution of the
  degrees of the vertices in the initial graph can be described
  explicitly. Here, we obtain the full distribution as well as
  convergence results.
\end{abstract}

\maketitle

\section{Introduction}
Random graph models are a topic of active research in probability
theory. Since the introduction of the first models, like the models of
\cite{ErdoesRenyi1959} and \cite{Gilbert1959}, several classes of
models for the evolution of networks have been introduced.
Frequently, such models try to mimic the behavior of social networks
like the internet; see \cite{CooperFrieze2003} and
\cite{BarabasiEtAl2002}. For a general introduction to random graphs
see the monographs \cite{Durrett} and \cite{Remco} and references
therein.

Another set of models aim at modeling (micro-)biological networks,
such as protein-protein interaction networks (see e.g.\
\cite{Wagner2001}, and \cite{Albert2005} for a specific application to
yeast) or metabolic networks \citep{Jeong2000}. In this paper, we
study a model introduced in \cite{Bhan2002},
\cite{PastorSatorras2003}, \cite{ChungEtAl2003} and
\cite{BebekEtAl2006}. Here, a vertex models a protein and an edge
denotes some form of interaction (e.g.\ one protein that inhibits the
expression of the second protein). Within the genome, the DNA encoding
for a protein can be duplicated (which in fact is a long evolutionary
process), such that the interactions of the copied protein are
partially inherited to the copy; see \cite{Ohno1970}. In the model we
study, every edge is copied with the same, independent, probability
$p$.

Our analysis extends previous work of \cite{ChungEtAl2003},
\cite{BebekEtAl2006} and \cite{BebekEtAl2006ip} in various
directions. We obtain results for the limit of the (expected) degree
distribution for the partial duplication model. Precisely, we are able
to determine a critical parameter $p \approx 0.567143$, the unique
solution of $pe^p=1$, below which approximately all vertices are
isolated; see Theorem~\ref{T1}. Moreover, we are able to obtain almost
sure limiting results for the number of $k$-cliques and $k$-stars in
the random graph; see Theorem~\ref{T2}. This entails precise
asymptotics of the transitivity ratio of the partial duplication
random graph; see Remark~\ref{rem:Tr}. Lastly, we study the
distribution and the large-scale behavior of the degrees of fixed
vertices; see Theorem~\ref{T3}.

\section{Model and results}

\subsection{Model}
Let us introduce some notation for (undirected) graphs. Afterwards, we
will define the random graph model we will study in the sequel.

\begin{definition}[Graph, degree, clique]\mbox{}
  \begin{asparaenum}
  \item A(n \emph{undirected}) \emph{graph} (without loops) is a tuple
    $G=(V,E)$, where $V$ is the set of vertices and $E \subseteq
    \{\{v,w\}: v,w\in V, v\neq w\}$ is the set of edges.
  \item A \emph{$k$-clique} within $G = (V,E)$ is a subset
    $V'\subseteq V$ with $|V'|=k$ and $\{\{v,w\}: v,w\in V',v\neq w\}
    \subseteq E$ (i.e.\ all vertices in $V'$ are connected). We denote
    by $C_k(G)$ the number of $k$-cliques in $G$ and by $C_k^\circ(G)
    := C_k(G)/|V|$ the relative frequency of $k$-cliques.
  \item For a graph $G=(V,E)$ and $v\in V$, we define the
    \emph{degree} of $v$ by
    $$ D_v := D_v(G):= |\{w: \{v,w\}\in E\}|.$$
    \sloppy Moreover, the absolute and relative \emph{degree
      distribution} is given by $(F_k(G))_{k=0,1,2,...}$ and
    $(F^\circ_k(G))_{k=0,1,2,...}$ through
    $$
      F_k(G) := |\{v: D_v(G)=k\}|,
      \qquad
      F_k^\circ(G) := \frac{1}{|V|} F_k(G).
    $$
    We also define their probability generating functions as
    $$
      H_q(G)
        := \sum_{k=0}^\infty F_k(G) q^k,
      \qquad
      H^\circ_q(G)
        := \sum_{k=0}^\infty F^\circ_k(G) q^k
      \qquad\text{for }q\in[0,1].
    $$
  \item A \emph{$k$-star} within $G = (V,E)$ with center $v$ is a
    vector $(v,v_1,...,v_k)$ with $v, v_1,...,v_k\in V$ and
    $\{v,v_i\}\in E, i=1,...,k$, (i.e.\ every $v_i$ is connected to
    $v$). We denote by
    $$
      S_k(G) = \sum_{\ell=k}^\infty \ell\cdots (\ell-k+1) F_\ell(G)
    $$
    the number of $k$-stars in $G$ and by $S_k^\circ(G) := S_k(G)/|V|$
    the relative frequency of $k$-stars.
  \end{asparaenum}
\end{definition}

\begin{remark}[Relationships]
  The quantities we just defined are intertwined by some
  relationships. For example, since the $S_k$-values equal
  the $k$th factorial moments of the degree distributions,
  we have
  \begin{align*}
    S_k(G) = \frac{d^k}{dq^k} H_q(G)\Big|_{q=1}.
  \end{align*}
  In particular, note that $S_1(G) = \sum_\ell \ell F_\ell(G) = 2C_2(G)$.
  This is clear, since every $1$-star counts an edge twice, having
  two possibilities of its center, while each edge corresponds to a $2$-clique.
  However, $C_k(G)$ cannot be obtained from the degree distribution, if $k\geq 3$.
\end{remark}

\noindent
We start with a basic definition of the model; see also
Figure~\ref{abb:dd}.

\begin{definition}[Partial duplication random graph\label{def:PDn}]
  Let $p \in [0,1]$. We define the following random graph process --
  called \emph{partial duplication random graph} or PDn graph --
  $\mathcal G = (G_n)_{n=n_0, n_0+1,...}$ with $G_n = (V_n, E_n)$,
  where $G_n$ is the graph at time $n=n_0, n_0+1,...$ with vertex set
  $V_n$ and (undirected) edge set $E_n \subseteq \{ \{v,w\}: v,w \in
  V_n, v\neq w\}$. Starting in some $G_{n_0} = (V_{n_0}, E_{n_0})$
  with $|V_{n_0}|=n_0$, the dynamics at time $n$ is as follows: A
  vertex $v$ is picked uniformly at random from $V_n$. Upon such an
  event, a new node $v' \notin V_{n}$ is created and every edge
  connected to~$v$ (i.e.\ every $e\in E_{n}$ with $e = \{v,w\}$ for
  some $w\in V_{n}$) is copied with probability~$p$, i.e.\ $\{v',w\}
  \in E_{n+1}$ with probability~$p$, independently of all other edges.

  We define by $C_k(n) := C_k(G_n)$ and $C_k^\circ(n) :=
  C_k^\circ(G_n)$ the number of $k$-cliques in $G_n$ and the average
  number of cliques a vertex is involved in, respectively. Similarly,
  we define by $S_k(n) := S_k(G_n)$ and $S_k^\circ(n) :=
  S_k^\circ(G_n)$ the number of $k$-stars in $G_n$ and the average
  number of $k$-stars a vertex is centered in, respectively.
  Moreover, define $F_k(n) := F_k(G_n), F^\circ_k(n) :=
  F^\circ_k(G_n), k=0,1,2,...$ the degree distribution of $G_n$ and
  its probability generating function by $H_q(n) := H_q(G_n), H_q^\circ(n) :=
  H_q^\circ(G_n)$.

  Throughout the manuscript, we will assume that the initial graph
  $G_{n_0}$ is connected and deterministic.
\end{definition}

\begin{figure}[h!]
  \begin{tikzpicture}[scale=0.75,auto=right,every node/.style={circle,fill=gray!20}]
    
    % Runde Kante - Zeichnen vorher nötig.
    \draw[dashed,gray] plot [smooth,tension=1] coordinates {(7,4) (8.3,2) (9.8,1)};
    \draw[dashed,gray] plot [smooth,tension=1] coordinates {(7,4) (7.6,2.5) (7.9,1)};
    
    \draw plot [smooth,tension=1] coordinates {(13,4) (14.3,2) (15.8,1)};
    
    % Nachbarschaft von v_t^*, 3 mal verschoben.
    \node (n1) at (2.7,2.5) {$\;v\;\,$};
    \node (n2) at (2.7,4)   {$\phantom{v'}$};
    \node (n3) at (4.4,2.5) {$\phantom{v'}$};
    \node (n4) at (3.5,1)   {$\phantom{v'}$};
    \node (n5) at (1.9,1)   {$\phantom{v'}$};
    \node (n6) at (1,2.5)   {$\phantom{v'}$};

    \foreach \from/\to in
    {n1/n2,n1/n3,n1/n4,n1/n5,n1/n6,
      n2/n3,n4/n5}
    \draw (\from) -- (\to);
    
    \node (m1) at (8.7,2.5) {$\;v\;\,$};
    \node (m2) at (8.7,4)   {$\phantom{v'}$};
    \node (m3) at (10.4,2.5){$\phantom{v'}$};
    \node (m4) at (9.8,1)   {$\phantom{v'}$};
    \node (m5) at (7.9,1)   {$\phantom{v'}$};
    \node (m6) at (7,2.5)   {$\phantom{v'}$};
    
    \foreach \from/\to in
    {m1/m2,m1/m3,m1/m4,m1/m5,m1/m6,
      m2/m3,m4/m5}
    \draw (\from) -- (\to);
    
    \node (o1) at (14.7,2.5) {$\;v\;\,$};
    \node (o2) at (14.7,4)   {$\phantom{v'}$};
    \node (o3) at (16.4,2.5) {$\phantom{v'}$};
    \node (o4) at (15.8,1)   {$\phantom{v'}$};
    \node (o5) at (13.9,1)   {$\phantom{v'}$};
    \node (o6) at (13,2.5)   {$\phantom{v'}$};
    
    \foreach \from/\to in
    {o1/o2,o1/o3,o1/o4,o1/o5,o1/o6,
      o2/o3,o4/o5}
    \draw (\from) -- (\to);
    
    % Hinzufügen des neuen Knoten und der entsprechenden (geraden) Kanten.
    \node[circle,fill=gray!65] (v1) at (7,4)  {$v'$};
    \foreach \from/\to in
    {v1/m2,v1/m3,v1/m6}
    \draw[dashed,gray] (\from) -- (\to);
    
    \node (v2) at (13,4)  {$v'$};
    \foreach \from/\to in
    {v2/o6,v2/o3}
    \draw (\from) -- (\to);
    
    % Beschriftung der Übergänge
    \draw[->,line width=1pt] (5.2,2.5)  -- (6.3,2.5)  node[draw=none, fill=none, midway, sloped, above] {};
    \draw[->,line width=1pt] (11.2,2.5) -- (12.3,2.5) node[draw=none, fill=none, midway, sloped, above] {};
  \end{tikzpicture}
  \caption{Illustration of one step in the PDn random graph; see also
    Definition~\ref{def:PDn}. At time $n=6$ (since there are 6
    vertices in the graph on the left), the vertex $v$ is picked
    uniformly at random. It is copied, giving rise to the new vertex
    $v'$, together with all potential edges to neighbors of $v$ (see
    the dashed lines in the middle). Then, every dashed line is kept
    independently of the others with probability $p$. The result is
    the random graph with $n=7$ vertices on the right.}
  \label{abb:dd}
\end{figure}
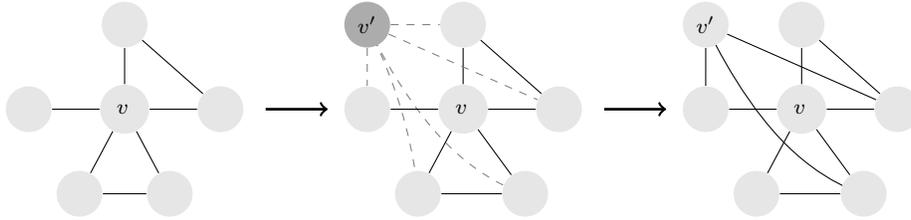

\begin{remark}[Basic observations]
  \begin{asparaenum}
  \item Since we assume that the initial graph $G_{n_0}$ is connected,
    $G_n$ consists of one connected component and singleton nodes
    which arise if a vertex is copied but none of its edges (unless
    $p=1$ where all vertices are connected), $n=n_0+1,n_0+2,...$ In
    Theorem~\ref{T1}, we will study the expected proportion of
    singleton vertices.
  \item Let $G_{n_0}$ be an $m$-partite graph for some $m\leq n_0$,
    i.e.\ there is a partition of $V_{n_0}$ into sets
    $W_1(n_0),...,W_m(n_0)$ such that $E_{n_0}\subseteq \big\{\{v,w\}:
    v\in W_i(n_0), w\in W_j(n_0) \text{ for some }i\neq
    j\}\big\}$. This means that vertices in $W_i(n_0)$ are only
    connected to vertices outside $W_i(n_0)$, $i=1,...,m$. Then, $G_n$
    is $m$-partite for all $n\geq n_0$.\\
    Indeed, if a vertex $v\in W_i(n_0)$ is copied, it is connected
    only to vertices outside $W_i(n_0)$, and so is the copied
    vertex. Iterating this argument shows that $G_n$ is $m$-partite,
    as well. Moreover, we see that the sizes
    $(W_1(n),...,W_m(n))_{n=n_0,n_0+1,...}$ of the partition elements,
    follow P\'olya's urn dynamics.
  \end{asparaenum}
\end{remark}

\begin{remark}[Related random graph models\label{rem:relG}]
  \begin{asparaenum}
  \item In \cite{PastorSatorras2003}, an extension of the PDn-model
    was introduced. After partially (with probability $p$ per edge)
    duplicating a vertex $v\in V_n$, giving rise to the new vertex
    $v'$, every vertex $w\in V_{n}$ additionally is connected to $v'$
    with probability $r/n$ for some constant $r>0$. This simple modification
    is said to induce the scale-free property (\cite{KimEtAl2002},
    \cite{BebekEtAl2006}), but, as we will see in Remark \ref{rem:connB}.3,
    this does not hold for at least some values of $p$.
  \item As stated by \cite{IspolatovEtAl2005} the famous preferential
    attachment model also shows up in a special limiting case of the
    PDn-model. Assume the case of small $p$, which implies that at
    most one edge is copied upon a duplication event, while the
    probability that at a time $n$ a fixed node $v_k$ (with degree
    $D_k(n)$) becomes connected to the new node conditioned on the
    event that at least one edge is retained equals
    $$
      \pi_{k}(n)
       := \frac{\frac{D_k(n)}n\cdot p}{\sum_{k\geq1}F_k^\circ(n)(1-(1-p)^k)}.
    $$
    Using $1-(1-p)^k\overset{p\to0}\sim pk$ and that $S_1(n)=2C_2(n)$, we
    obtain that $\pi_k(n)\xrightarrow{p\to0}D_k(n)/2C_2(n)$ for each $n$.
    So, when conditioning the PDn-model to have no isolated vertices, the
    preferential attachment model arises in the limit $p\to 0$.
  \item Another duplication model was recently introduced by
    \cite{Thoernblad} and further analyzed by \cite{BackhauszMori1, BackhauszMori2}.
    Here, the random graphs consist of disjoint cliques, almost surely. In each
    time step, a vertex $v$ is chosen uniformly at random and duplicated with
    probability $\theta$. Upon such an event, a new vertex $v'$ is created and
    connected to all neighbors of $v$ and to $v$ itself. With probability $1-\theta$,
    all edges connecting $v$ to its neighbors are deleted. For this model, the
    degree distribution was analyzed and a phase transition at $\theta=1/2$ was
    discovered in \cite{Thoernblad}. The limiting case $\theta=1/2$ and the
    maximal degree was studied in \cite{BackhauszMori1, BackhauszMori2}. The
    PDn-model is related in the case $p=\theta=1$, although the new vertex
    is not connected to the copied vertex in the PDn-model. 
  \end{asparaenum}
\end{remark}

~~
\bigskip

\subsection{Results}
Let us now come to the main conclusions about the PDn model we have
derived. First, Theorem~\ref{T1} states a critical value
$p\approx 0.567143$, below which almost all vertices have degree $0$,
i.e.\ are isolated. Its proof, which is based on a time-continuous
version of PDn and a duality argument with a piece-wise deterministic
Markov process is found in Section~\ref{S:proofT1}. Second,
Theorem~\ref{T2} studies the occurrences of certain subgraphs in the
PDn: $k$-cliques, which deliver an understanding of topological
properties of the initial graph retained during the process, and
$k$-stars, which give insights in the degree distribution, since they
describe its factorial moments. Here, we are able to obtain almost
sure limit results using Martingale theory. Third, Theorem~\ref{T3}
deals with the evolution of the degrees of fixed vertices in the
initial graph.  Here, we obtain almost sure as well as
$\mathcal L^r$-limit results.  The proofs of Theorems~\ref{T2}
and~\ref{T3} are found in Section~\ref{S:proofT23}.

\begin{remark}[Notation]
  In our Theorems, for sequences $a_1, a_2,...$ and $b_1, b_2,...$, we
  will write $a_n \stackrel{n\to\infty}\sim b_n$ iff $a_n/b_n
  \xrightarrow{n\to\infty}1$. The Gamma-function is denoted $t\mapsto
  \Gamma(t) :=\int\limits_0^\infty x^{t-1}e^{-x}dx$. Empty products,
  i.e. products of the form $\prod_{i=1}^0f(i)$, are defined to be~1.
\end{remark}

\begin{theorem}[Frequency of isolated vertices\label{T1}]
  Let $p^\ast$ be the (unique) solution of $pe^p=1$ (or $p + \log p
  =0$). Then, the following dichotomy holds:
  \begin{asparaenum}
  \item For $p\leq p^\ast$, it holds that
    $\sup_{q\in[0,1]} |H^\circ_q(n) -1| \xrightarrow{n\to\infty}0$
    almost surely. In particular, for $q=0$, we have that
    $F_0^\circ(n) \xrightarrow{n\to\infty} 1$, i.e.\ the proportion of
    isolated vertices converges to~1.
  \item For $p>p^\ast$, it holds that $\mathbf E[H^\circ_q(n)]
    \xrightarrow{n\to\infty} x_\infty < 1$ for all $q\in [0,1)$ (and
    in particular $\mathbf E[F_0^\circ(n)] \xrightarrow{n\to\infty}
    x_\infty$) with
    $$
    x_\infty := 1-\Big(1-\tfrac 1p \log\big(\tfrac 1p\big)\Big) \cdot
    \sum_{k=1}^\infty \frac{S_k^\circ(n_0)}{k!}(-1)^{k-1}
    \prod_{\ell=1}^{k-1}\Big(1-\frac{1-p^\ell}{p\ell}\Big).
    $$
  \end{asparaenum}
\end{theorem}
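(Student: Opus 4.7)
The plan is to derive a recursion for $g_n(q) := \mathbf{E}[H_q^\circ(n)]$, identify its continuous-time scaling limit as the backward equation of a piecewise-deterministic Markov process (PDMP) on $[0,1]$, and read off the dichotomy from the long-time behaviour of that process. A direct one-step bookkeeping, conditioning on which vertex $v$ is duplicated and using that the new vertex contributes $(1-p+pq)^{D_v}$ to $H_q$ in expectation while each neighbour $w$ of $v$ has its factor shifted from $q^{D_w}$ to $q^{D_w}(1-p+pq)$ on average, together with $\sum_v \sum_{w\sim v} q^{D_w} = q H_q'(n)$, yields
\[
\mathbf{E}[H_q(n+1)\mid G_n] = H_q(n) + \tfrac{1}{n}H_{\phi(q)}(n) + \tfrac{pq(q-1)}{n}H_q'(n),\qquad \phi(q) := 1-p+pq.
\]
Dividing by $n+1$ and taking expectations gives
\[
g_{n+1}(q) = g_n(q) + \tfrac{1}{n+1}(\mathcal{A}g_n)(q),\qquad (\mathcal{A}f)(q) := f(\phi(q)) - f(q) + pq(q-1)f'(q).
\]
Since $\sum 1/(m+1)\sim\log n$ this is an Euler scheme for $\partial_s g = \mathcal{A}g$ on the time scale $s=\log n$, and $\mathcal{A}$ is exactly the generator of the PDMP $(Q_s)_{s\geq 0}$ on $[0,1]$ that decays along $\dot q = pq(q-1)$ between jumps and jumps from $q$ to $\phi(q)$ at rate $1$. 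Accordingly, modulo a discretisation estimate, $g_n(q) \approx \mathbf{E}_q[g_{n_0}(Q_{\log(n/n_0)})]$.

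For part (i), I use the Lyapunov function $V(q) := -\log(1-q)\geq 0$: a direct computation gives $(\mathcal{A}V)(q) = -\log p - pq$, so $\tfrac{d}{ds}\mathbf{E}_q[V(Q_s)] = -\log p - p\mathbf{E}_q[Q_s]$. The defining equation of $p^*$ is $-\log p=p$, so $p\leq p^*$ is equivalent to $-\log p \geq p$, and the displayed derivative is non-negative. An invariant probability $\pi$ on $[0,1]$ must satisfy $-\log p - p(1-m_1)=0$ with $m_1 := \int(1-q)d\pi \geq 0$, which forces $m_1 = 1-\tfrac{1}{p}\log\tfrac{1}{p}\leq 0$ when $p\leq p^*$; hence $m_1=0$ and $\pi=\delta_1$ is the only option. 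Combined with compactness of $[0,1]$ every weak subsequential limit of $Q_s$ equals $\delta_1$, so $g_n(q)\to g_{n_0}(1)=1$ pointwise, and monotonicity of $g_n$ in $q$ upgrades this to uniform convergence. In particular $\mathbf{E}[F_0^\circ(n)]=g_n(0)\to 1$. Because $F_0^\circ(n)$ is bounded and non-decreasing (isolated vertices stay isolated) it converges a.s., and the expectation limit forces the a.s.\ limit to equal $1$. The inequality $0\leq 1-H_q^\circ(n)\leq 1-F_0^\circ(n)$ then gives the uniform almost-sure statement.

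For part (ii), the same moment computation now produces a non-trivial $\pi$ on $[0,1)$ when $p>p^*$. Inserting $f(q)=(1-q)^k$ into $\int\mathcal{A}f\,d\pi=0$ yields the recursion $m_{k+1} = \bigl(1-\tfrac{1-p^k}{pk}\bigr)m_k$ for $k\geq 1$, while $f(q)=-\log(1-q)$ pins down $m_1 = 1-\tfrac{1}{p}\log\tfrac{1}{p}>0$. Because $p>p^*>1/2$ every factor $1-\tfrac{1-p^\ell}{p\ell}$ is strictly positive, so $(m_k)$ is a valid moment sequence on $[0,1]$ and determines $\pi$ uniquely by Hausdorff's theorem. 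Taylor-expanding
$g_{n_0}(q) = 1 + \sum_{k\geq 1}\tfrac{S_k^\circ(n_0)}{k!}(q-1)^k$
(a finite sum since $G_{n_0}$ is finite) and integrating term by term, using $(q-1)^k=(-1)^k(1-q)^k$, gives
\[
x_\infty = \int g_{n_0}\,d\pi = 1 - \sum_{k\geq 1}\tfrac{S_k^\circ(n_0)}{k!}(-1)^{k-1}m_k,
\]
which unfolds to the claimed closed form after substituting $m_k = m_1 \prod_{\ell=1}^{k-1}\bigl(1-\tfrac{1-p^\ell}{p\ell}\bigr)$.

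The main technical obstacle will be the rigorous passage from the discrete Euler recursion to the PDMP statement, namely proving ergodicity of $(Q_s)$ (convergence in distribution to $\pi$, respectively $\delta_1$, uniformly in the starting point) and exchanging the discretisation limit $n\to\infty$ with the mixing limit $s\to\infty$. The borderline case $p=p^*$ is the most delicate: there $\mathcal{A}V$ vanishes at $q=1$, so the drift towards $\delta_1$ is only neutral and a more careful coupling or a second Lyapunov function is needed to close the dichotomy at equality.
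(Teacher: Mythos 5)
Your setup is essentially the paper's: the one-step recursion you derive is exactly \eqref{eq:P12}, your PDMP $(Q_s)$ is the dual process $\mathcal X$ of Lemma~\ref{l:X} written in the variable $q=1-x$, and your series for $x_\infty$ is obtained by the same expansion in factorial moments. The gaps sit exactly where the paper does its real work. First, the passage from the discrete recursion to the PDMP semigroup is not a routine ``discretisation estimate'': the one-step operator $I+\tfrac{1}{n+1}\mathcal A$ is not a Markov transition kernel (the drift enters as a genuine derivative, not as an averaging), so the recursion is not itself a dual Markov chain, and an Euler/Chernoff-type argument would require uniform control of $\tfrac{d}{dq}g_n$ or other hypotheses you have not verified. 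The paper sidesteps this entirely by embedding PDn into a continuous-time graph with per-vertex rate $1+1/|V_t|$ (Definition~\ref{def:PDt}), for which Proposition~\ref{P:Hqt} is exact, and by proving an exact duality identity (Proposition~\ref{P:dual}) through showing that $s\mapsto\mathbf E[\widetilde H^\circ_{X_{t-s}}(s)]$ is constant.

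Second, your long-time analysis of $(Q_s)$ does not close. For $p<p^\ast$, uniqueness of the invariant law plus compactness does not yield convergence of the marginals: subsequential weak limits of $\mathcal L(Q_s)$ need not be invariant (only Ces\`aro averages are), and divergence of $\mathbf E[V(Q_s)]$ with $V(q)=-\log(1-q)$ does not force $Q_s\Rightarrow\delta_1$ because $V$ blows up at the absorbing point; moreover the identity $\int\mathcal A V\,d\pi=0$ presupposes $V\in\mathcal L^1(\pi)$ and fails for $\pi=\delta_1$, so it cannot be invoked for an arbitrary invariant law without extra justification. The paper instead proves almost sure convergence $X_t\to0$ pathwise, by coupling $-\log X_t$ with $U_t=U_0-pt+\log(1/p)P_t$ and using the law of large numbers for Poisson processes. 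For $p>p^\ast$ your argument assumes what has to be shown: $\delta_1$ (i.e.\ $x=0$) is invariant for every $p$, so the existence of a nontrivial ergodic law, convergence to it from any interior starting point, and $\pi(\{1\})=0$ (needed for your moment identities and for $x_\infty<1$) are precisely the content of the paper's hitting-time and optional-stopping estimates via Davis's stability theorem. You explicitly leave the critical case $p=p^\ast$ open; the paper closes it with a monotone coupling in $p$ and $\mathbf E[X^{(p)}_\infty]\downarrow0$ as $p\downarrow p^\ast$. Finally, your almost-sure upgrade in part~(i) is flawed as stated: $F_0(n)$ is non-decreasing, but $F_0^\circ(n)=F_0(n)/n$ is not; the correct route, taken in the paper via \eqref{eq:P11}, is that $F_0^\circ(n)$ is a bounded submartingale, whose a.s.\ limit must be $1$ because $\mathbf E[F_0^\circ(n)]\to1$.
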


~

\begin{remark}[Connections to work by Bebek et al
  (2006)\label{rem:connB}]\mbox{}
  \begin{asparaenum}
  \item Previously, it has been known that $F_0^\circ(n)
    \xrightarrow{n\to\infty}1$ for $p<0.5$; see e.g.\ Lemma~2 in
    \cite{BebekEtAl2006ip}. More precisely, as explained in the same
    paper, and as a consequence of Theorem~\ref{T2} below, the
    expected number of neighbors of a randomly chosen node converges
    to 0 for $p<0.5$. Theorem~\ref{T1} extends the range for which
    $F_0^\circ(n)$ converges to~1 to the range $p\leq
    p^\ast$. Interestingly, this number already appeared in the
    analysis of PDn in a different context; see below Theorem~1 in
    \cite{ChungEtAl2003} (which we recall in Remark~\ref{rem:chung}).
  \item Consider the expected degree distribution $(\mathbf
    E[F^\circ_k(n)])_{k=0,1,2,...}$ for large $n$. It is a well-known
    consequence of a fact (usually attributed to Paul L\'evy) that a
    weak limit for such a sequence of distributions as $n\to\infty$
    exists if and only if the probability generating functions $x\mapsto \mathbf
    E[H^\circ_x(n)]$ converge to a function $h$ which is continuous at
    $x=1$. The limiting distribution then has $h$ as its probability generating
    function. As the Theorem shows, only for $p\leq p^\ast$ such a
    convergence holds and $h=1$. This implies that the degree
    distribution converges to $\delta_0$ for $p\leq p^\ast$ and there
    is no limiting degree distribution for $p>p^\ast$.

    \cite{BebekEtAl2006} call a distribution
    $(f^\circ_k)_{k=0,1,2,...}$ defective if $f^\circ_0 + f^\circ_1 +
    \cdots <1$ and non-defective if $f^\circ_0 + f^\circ_1 + \cdots =
    1$. They also raise the question of a critical value for $p$ which
    separates defective from non-defective limits of $(\mathbf
    E[F_k^\circ(n)])_{k=0,1,2,...}$.  As Theorem~\ref{T1} shows, the
    (vague) limit of $(\mathbf E[F_k^\circ(n)])_{k=0,1,2,...}$ is
    non-defective only for $p\leq p^\ast$ and defective otherwise.
    In particular, we have resolved a question raised in
    \cite{BebekEtAl2006ip}, since we have in fact shown that there is
    no limiting (probability) distribution for $(\mathbf
    E[F^\circ_k(n)])_{k=0,1,2,...}$ in the case $p>p^\ast$.
  \item \sloppy Furthermore, considering any generalization of the
    partial duplication model producing additional edges (e.g. the
    Pastor--Satorras et al model), by a suitable coupling argument and
    Theorem \ref{T1} it follows immediately that the limiting degree
    distribution is bound to be defective if the edge retaining
    probability is greater than $p^\ast$. Fueling the duplication
    mechanism with more edges, might also generate a defective limit
    for even lower values than $p^\ast$.
  \item Frequently in the literature on several random graph models,
    power laws for the (expected) degree distributions are found; see
    e.g.\ \cite{AlbertBarabasi2002}. In mathematical terms, let
    $(F_k^\circ(n))_{k=0,1,2,...}$ be the degree distribution for some
    random graph at time $n$. We say that a power-law for $n\to\infty$
    with exponent $b$ holds, if for some $c>0$,
    \begin{align*}
      \lim_{k\to\infty} k^b \lim_{n\to\infty}\mathbf E[F_k^\circ(n)] = c.
    \end{align*}
    In this sense, a power law does not exist for the PDn model since
    for all $k>0$ we have shown that $\lim_{n\to\infty} \mathbf
    E[F_k^\circ(n)] = 0$. This observation was also made by
    \cite{BebekEtAl2006ip}, who argue that the proof for the power law
    behavior of PDn given in \cite{ChungEtAl2003} is false. We come
    back to this proof in Remark~\ref{rem:chung}. Note, however, that
    for $p\leq p^\ast$ it is still possible that the connected component
    of $G_n$ satisfies a power law, i.e.\ there are $b,c>0$ with
    \begin{align*}
      \lim_{k\to\infty} k^b \lim_{n\to\infty}
      \mathbf E\Bigg[\frac{F_k(n)}{\sum_{\ell=1}^\infty F_\ell(n)}\Bigg] = c.
    \end{align*}
    Actually, the preferential attachment model arising for small $p$
    as explained in Remark~\ref{rem:relG}.2, and that model being
    known to satisfy a power-law (see e.g.\ \cite{AlbertBarabasi2002}),
    supports this conjecture. Although this power law behavior of the
    connected component has already been discussed in
    \cite{IspolatovEtAl2005}, care must be taken in order to provide a
    rigorous result. We defer the deeper analysis of the connected
    component to future research.
  \end{asparaenum}
\end{remark}

\begin{theorem}[Cliques, stars\label{T2}]
  \begin{asparaenum}
  \item Let $C_k(n_0)>0$ and $\mathcal F_\infty:=\sigma(G_n;n\geq n_0)$.
    Then, there is an $\mathcal F_\infty$-measurable random variable
    $C_k(\infty)$ with $\mathbf P(C_k(\infty)>0)>0$, such that
    \begin{align}
      \label{eq:T21}
      n^{-kp^{k-1}}C_k(n)\xrightarrow{n\to\infty} C_k(\infty),
    \end{align}
    almost surely and in $\mathcal L^2$ for each $k$. Moreover,
    \begin{align}
      \label{eq:T22}
      \mathbf E[C_k(n)] = C_k(n_0)\cdot
      \prod_{m=n_0}^{n-1}\frac{m+kp^{k-1}}{m} \stackrel{n\to\infty} \sim
      \frac{C_k(n_0)\cdot\Gamma(n_0)}{\Gamma(n_0+kp^{k-1})}n^{kp^{k-1}}.
    \end{align}
  \item For $S_k(n)$, note that $S_1(n) = 2C_2(n)$, such that the
    asymptotics for $S_1$ can be read off from the asymptotics of
    $C_2$. In addition, for each $k\geq 2$ there is an $\mathcal
    F_\infty$-measurable random variable $S_k(\infty)$, such that
    \begin{align}
      \label{eq:T21b}
      n^{-(kp+p^k)} S_k(n) & \xrightarrow{n\to\infty} S_k(\infty)
    \end{align}
    almost surely. Moreover,
    \begin{equation}
      \label{eq:T22b}
      \begin{aligned}
        \mathbf E[S_2(n)] & = \Big(S_2(n_0) + \frac{2}{p}
        S_1(n_0)\Big)\prod_{k=n_0}^{n-1}\frac{k+2p+p^2}{k} - S_1(n_0)
        \frac{2}{p} \prod_{k=n_0}^{n-1}\frac{k+2p}{k} \\ &
        \stackrel{n\to\infty} \sim \Big(S_2(n_0) + \frac{2}{p}
        S_1(n_0)\Big)\frac{\Gamma(n_0)}{\Gamma(n_0+2p+p^2)}
        n^{2p+p^2}.
      \end{aligned}
    \end{equation}
  \end{asparaenum}
\end{theorem}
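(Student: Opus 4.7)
The plan is to introduce, for each subgraph statistic $X_n \in \{C_k(n), S_k(n)\}$, a nonnegative martingale obtained by dividing $X_n$ by an explicit product, to deduce the expectation formula by iterating the one-step conditional expectation, and to bound the $\mathcal{L}^2$-norm of the martingale to get both almost sure and $\mathcal{L}^2$ convergence, together with strict positivity of the limit in expectation.

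First I would compute the one-step drift of $C_k$. A new $k$-clique in the transition $G_n \to G_{n+1}$ must contain the newly created vertex $v'$, and this happens exactly when the duplicated vertex $v$ lies in a $k$-clique $K$ of $G_n$ and all $k-1$ edges from $v$ to $K\setminus\{v\}$ are copied. Since $v$ is sampled uniformly and each $k$-clique has $k$ vertices, this gives
\begin{align*}
  \mathbf{E}[C_k(n+1) - C_k(n) \mid G_n] = \frac{kp^{k-1}}{n}\,C_k(n),
\end{align*}
so $M_n^{(k)} := C_k(n) / \prod_{m=n_0}^{n-1}\frac{m+kp^{k-1}}{m}$ is a nonnegative martingale. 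Iterating the drift yields \eqref{eq:T22}, and the identity $\prod_{m=n_0}^{n-1}(1+a/m) = \frac{\Gamma(n+a)\Gamma(n_0)}{\Gamma(n)\Gamma(n_0+a)} \sim \frac{\Gamma(n_0)}{\Gamma(n_0+a)}n^{a}$ delivers the Gamma asymptotics. For stars the drift has two sources: the new vertex $v'$ centers $D_{v'}^{\underline{k}}$ many $k$-stars with $D_{v'} \sim \mathrm{Bin}(D_v,p)$, contributing $p^k S_k(n)/n$ in expectation via the $k$-th binomial factorial moment; and each neighbor $w$ of $v$ attached to $v'$ (probability $p$) adds $k\,D_w^{\underline{k-1}}$ to the star count at $w$. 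Combined with $D_w\,D_w^{\underline{k-1}} = D_w^{\underline{k}} + (k-1)\,D_w^{\underline{k-1}}$, these yield
\begin{align*}
  \mathbf{E}[S_k(n+1) - S_k(n)\mid G_n] = \frac{kp+p^k}{n}\,S_k(n) + \frac{pk(k-1)}{n}\,S_{k-1}(n).
\end{align*}
For $k=2$ the inhomogeneity is $(2p/n)S_1(n) = (4p/n)C_2(n)$, whose expectation is known from part (i), and variation of parameters gives \eqref{eq:T22b} in closed form.

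Almost sure and $\mathcal{L}^2$ convergence in \eqref{eq:T21} will follow from $\mathcal{L}^2$-boundedness of $M_n^{(k)}$. The one-step increment $\Delta_n := C_k(n+1) - C_k(n)$ is bounded by the number $T_k(v)$ of $k$-cliques through the duplicated vertex $v$, and $\mathbf{E}[\Delta_n^2 \mid G_n]$ can be controlled by a drift-type computation applied to the joint count of \emph{pairs} of $k$-cliques sharing at least one vertex. A parallel argument shows this pair-count grows at rate at most $n^{2kp^{k-1}}$, which matches the square of the normalization and renders the quadratic variation of $M_n^{(k)}$ summable. Strict positivity $\mathbf{P}(C_k(\infty)>0) > 0$ then follows from $\mathbf{E}[M_\infty^{(k)}] = C_k(n_0) > 0$. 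For \eqref{eq:T21b} I would proceed by induction on $k$: given the a.s.\ and $\mathcal{L}^2$-asymptotics of $S_{k-1}(n)$, the inhomogeneous term in the $S_k$-drift becomes an explicit Doob compensator, so subtracting it and renormalizing produces a martingale whose $\mathcal{L}^2$-boundedness reduces to the inductive bound on $S_{k-1}$.

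The main obstacle I anticipate is the second-moment estimate for the clique count. It amounts to estimating the expected number of pairs of $k$-cliques with a prescribed intersection pattern: two $k$-cliques sharing $j\ge 2$ vertices also share $\binom{j}{2}$ edges, and one must verify that none of these intersection types produces a growth rate exceeding $n^{2kp^{k-1}}$, which in turn calls for a systematic drift computation for each pattern. The star induction introduces additional bookkeeping because of the explicit $S_{k-1}$-dependence, but once these second-moment bounds are in hand, the martingale convergence machinery delivers all statements.
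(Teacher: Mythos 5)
Your treatment of the cliques is essentially the paper's argument: the one-step drift $\mathbf E[C_k(n+1)\mid\mathcal F_n]=(1+\tfrac kn p^{k-1})C_k(n)$, the normalized nonnegative martingale, the Gamma-asymptotics for the expectation, and a second-moment bound obtained by a drift analysis of pairs of $k$-cliques classified by their intersection pattern (the paper's $C_{k,\ell}(n)$, pairs sharing exactly $\ell$ vertices). You correctly identify that the whole weight of the $\mathcal L^2$ statement lies in showing that no intersection type grows faster than $n^{2kp^{k-1}}$ (in fact one needs, and the paper proves, a bound $O(n^{2kp^{k-1}-\delta})$ for the pairs sharing at least one vertex, so that the extra terms are dominated); your quadratic-variation packaging and the paper's direct recursion for $\mathbf E[C_k(n)^2]$ are interchangeable once that estimate is in hand. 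Your drift computation for $S_k$ and the variation-of-parameters formula for $\mathbf E[S_2(n)]$ also match the paper.

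The genuine gap is in your induction for the stars. You claim that after subtracting the Doob compensator of the inhomogeneous term $\tfrac{pk(k-1)}{n}S_{k-1}(n)$, the resulting martingale's $\mathcal L^2$-boundedness ``reduces to the inductive bound on $S_{k-1}$.'' It does not: the quadratic variation of that martingale is governed by the conditional second moments of the increments of $S_k$ itself, i.e.\ by pairs of $k$-stars sharing the duplicated vertex or one of its neighbours, and this is not controlled by $S_{k-1}$. This is exactly the difficulty the paper flags in Remark~\ref{rem:critval}.2: the pair structure of $k$-stars (equal centers, adjacent centers, neither) is much more complex than for cliques, and the authors state they are unable to prove even $\mathcal L^1$ convergence of $n^{-(kp+p^k)}S_k(n)$ for $k\geq2$ -- so your inductive hypothesis of $\mathcal L^2$-asymptotics for $S_{k-1}$ is available only at the base case $S_1=2C_2$, and the induction does not close for $k\geq3$ (nor does the $\mathcal L^2$ step at $k=2$ come for free). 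Note also that the theorem only asserts almost sure convergence for $S_k$, and this can be obtained without any second moments: the paper forms the exact linear combination $Q_k(n)=\sum_{\ell\le k}a_\ell S_\ell(n)$ that diagonalizes the drift, so that the normalized $Q_k$ is a nonnegative martingale converging a.s., and then peels off the lower-order $S_\ell$ using their smaller growth exponents; alternatively, your compensator route can be repaired by observing that the compensated martingale is $\mathcal L^1$-bounded (this only needs first moments of $S_{k-1}$) and the increasing compensator has finite expectation in the limit, hence converges a.s. As written, however, the star part of your argument rests on an $\mathcal L^2$ reduction that fails.
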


\begin{remark}[Dependence on the initial graph]
  The $C_k(n)$ demonstrate a topological discrepancy between duplication
  based and preferential attachment models: In preferential attachment
  often the number of edges added to the graph in one time step is
  bounded by some constant $m$. Thus, the formation of new $(m+2)$-cliques
  is impossible, while the emergence of $k$-cliques with $k\leq m+1$
  is not -- both independent of the initial graph. In contrast to this,
  in the PDn new $k$-cliques occur if and only if there is at least one
  $k$-clique in the initial graph.
  
  Also, Theorem~\ref{T1}.2 shows that the limits of the degree distribution
  for $p>p^\ast$ very well depend on the initial values $S_k^\circ(n_0)$
  as opposed to many limiting results for preferential attachment.
\end{remark}

\begin{remark}[Moments of the degree distribution]\label{rem:moments}
  Using $\stirl nk$, the number of partitions of $\{1,...,n\}$ into
  $k$ nonempty sets, also known as the Stirling numbers of the second
  kind, we can write the moments of the degree distribution as
  \[
    M_\ell(n)
      :=\sum_{k\geq 0}k^\ell F_k^\circ(n)
      =\sum_{k\geq 0}F_k^\circ(n)\sum_{m=0}^\ell \stirl \ell m k_{\downarrow m}
      =\sum_{m=0}^\ell \stirl \ell m S_m^\circ(n),
  \]
  where $\stirl \ell\ell=1$. By \eqref{eq:T21b} we obtain
  $M_\ell(n)\sim S_\ell^\circ(n)$ almost surely and thus immediate
  limiting results for the moments.
\end{remark}

\begin{remark}[Critical values and $\mathcal L^1$-convergence]\label{rem:critval}
  \begin{asparaenum}
  \item
    It is a simple consequence of Theorem~\ref{T2}, that several
    critical values exist which distinguish cases for $C_k^\circ(n)$,
    the relative frequencies of $k$-cliques, and $S_k^\circ(n)$, the
    relative frequencies of $k$-stars, converging to~0 or
    diverging. Precisely, we obtain for $C_k^\circ$
    $$ C_k^\circ(n)\xrightarrow{n\to\infty}
      \begin{cases} \infty, & \text{ if } p>k^{-1/(k-1)}, \\
                         0, & \text{ if } p<k^{-1/(k-1)}.
      \end{cases}$$
    For the limiting case
    $p := k^{-1/(k-1)}$, we obtain 
    that $C_k^\circ(n)
    \xrightarrow{n\to\infty} C_k(\infty)$. Analogously, for $S_k^\circ$,
    and for the (unique) solution $p_k$ (in $[0,1]$) of $pk + p^k = 1$, 
    $$ S_k^\circ(n)\xrightarrow{n\to\infty}
      \begin{cases} \infty, & \text{ if } p>p_k, \\
                         0, & \text{ if } p<p_k.
      \end{cases}$$
    Surprisingly, none of those critical values equal $p^\ast$ from
    Theorem~\ref{T1}.

    There might be a connection to the $p_k$ though: Assume that the
    moments $M_x(n):=\sum k^xF_k^\circ(n)$ satisfy $M_x(n)\sim
    n^{px+p^x-1}$ not only for $x\in\mathbb N$ (as follows
    from Remark \ref{rem:moments}), but also for each $x\in\mathbb R^+$.
    For $p\geq p^\ast$, we see that the inequality $px+p^x\geq
    px+e^{-px}>1$ holds for all $x>0$, while for $p < p^\ast$ there
    are $x>0$ with $px+p^x < 1$. Thus, $p^\ast$ is the smallest value
    of $p\in[0,1]$ such that there is no positive solution $x$ of
    $px+p^x=1$. Hence, all $M_x(n)$ tend to $\infty$ if and only if
    $p>p^\ast$.
  \item Unfortunately, we are not able to show that the convergence in
    \eqref{eq:T21b} also holds in $\mathcal L^1(\mathcal F_\infty)$ if
    $k\geq 2$ and thus we cannot rule out the possibility that
    $S_k(\infty)$ is trivial, i.e.\ we cannot rule out
    $S_k(\infty)=0$. It should be possible to use a technique similar
    to the proof of the $\mathcal L^2$ convergence of the $C_k(n)$,
    but it is much more difficult since additionally there are three
    possible relations of the centers $c_1,c_2$ of two $k$-stars $s_1$
    and $s_2$:
    \[
      a)\ c_1=c_2,\qquad
      b)\ (c_1,c_2)\in E_n,\qquad
      c)\ \text{neither $a)$ nor $b)$},
    \]
    where each of those as well as the number of shared nodes or edges
    influence the evolutions of $k$-star pairs in a different way.
    Since the structures of pairs of $k$-stars are so complex, another
    approach might be more suitable in order to show non-triviality of
    $S_k(\infty)$.
  \end{asparaenum}
\end{remark}

\begin{remark}[Transitivity ratio\label{rem:Tr}]
  The transitivity ratio $Tr(G)$ of a graph $G = (V,E)$ is defined
  via $C_3(G)$ and $S_2(G)$ by
  $$ Tr(G) := \frac{6C_3(G)}{S_2(G)}.$$
  (Precisely, it is defined by the quotient of three times the number
  of triangles $C_3(G)$ and the number of connected triples, i.e.\ the
  number of triples $v,w,u\in V$ with $\{v,w\}, \{w,u\} \in E$. Each
  connected triple is counted twice by $S_2(G)$ upon summing over
  vertex $w$.) Hence, we find that 
  \begin{align*}
    \frac{\mathbf E[6C_3(n)]}{\mathbf E[S_2(n)]}
    \stackrel{n\to\infty}\sim \frac{6C_3(n_0)}{S_2(n_0) + \tfrac 2p
      S_1(n_0)} \frac{\Gamma(n_0 + 2p+p^2)}{\Gamma(n_0+3p^2)}
    n^{-2p(1-p)}.
  \end{align*}
  Moreover, $n^{2p(1-p)}Tr(n)$ converges (at least on the set
  $S_2(\infty)\neq 0$) to some integrable random variable by
  Theorem~\ref{T2}.
\end{remark}

\begin{theorem}[Degree evolution of the initial vertices\label{T3}]
  Let $V_{n_0} = \{1,...,n_0\}$, i.e.\ we number the initial vertices
  by $1,...,n_0$. In addition, let $D_k(n)>0$ be the degree of vertex
  $k\leq n_0$ at time $n$. Then, for $n\geq n_0$ and $\ell\geq a$,
  \begin{align}\label{eq:T32}
    \mathbf{P}(D_k(n)=\ell|D_k(n_0)=a)
     &= \sum_{m=a}^\ell(-1)^{m-a}{\ell-1 \choose m-1}{m-1\choose a-1}
         \prod_{j=n_0}^{n-1}\left(1-\frac{pm}{j}\right).
  \end{align}
  Moreover, there is an almost surely positive random variable $D_k(\infty)$,
  such that
  \begin{align}
    \label{eq:T33}
    n^{-p}D_k(n)\xrightarrow{n\to\infty} D_k(\infty)
  \end{align}
  almost surely and in $\mathcal L^r$ for each $r\geq 1$ and, using
  $\ell_{\uparrow m} := \ell \cdots (\ell+m-1)$ for $m=1,2,\ldots$,
  \begin{align}
    \label{eq:T34}
    \mathbf E[n^{-mp}D_k(n)_{\uparrow m}]
     &= \frac{D_k(n_0)_{\uparrow m}}{n^{mp}}\prod_{\ell=n_0}^{n-1}\tfrac{\ell+mp}\ell
      \xrightarrow{n\to\infty}\frac{D_k(n_0)_{\uparrow m}\cdot\Gamma(n_0)}{\Gamma(n_0+mp)}
      = \mathbf E[D_k(\infty)^m].
  \end{align}
\end{theorem}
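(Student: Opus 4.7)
The first observation is that $(D_k(n))_{n \ge n_0}$ is a time-inhomogeneous Markov chain whose only possible increment is an upward jump of size one: the degree of~$k$ increases by one precisely when a current neighbour of~$k$ is selected (probability $D_k(n)/n$) and the corresponding edge is retained (probability~$p$). Hence $D_k(n+1) - D_k(n) \mid \mathcal F_n \sim \mathrm{Ber}(pD_k(n)/n)$, which yields the Kolmogorov forward recursion $P_\ell(n+1) = (1-p\ell/n)P_\ell(n) + (p(\ell-1)/n) P_{\ell-1}(n)$. I would prove \eqref{eq:T32} by induction on~$n$: the base case $n = n_0$ reduces to $\binom{\ell-1}{a-1}(1-1)^{\ell - a} = \mathbf 1_{\{\ell = a\}}$ after applying the symmetry $\binom{\ell-1}{m-1}\binom{m-1}{a-1} = \binom{\ell-1}{a-1}\binom{\ell-a}{m-a}$, and the inductive step amounts to plugging the formula at step~$n$ into the recursion and matching, for each $m$, the coefficient of $\prod_{j=n_0}^{n-1}(1-pm/j)$, which collapses to the elementary identity $(\ell - m)\binom{\ell-1}{m-1} = (\ell-1)\binom{\ell-2}{m-1}$.

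For the rising factorial moments in \eqref{eq:T34}, a one-step computation using $(d+1)_{\uparrow m} = d_{\uparrow m}(d+m)/d$ gives $\mathbf E[D_k(n+1)_{\uparrow m} \mid \mathcal F_n] = (n + mp)/n \cdot D_k(n)_{\uparrow m}$. Iterating yields the closed product, and the standard Gamma asymptotic $\prod_{\ell = n_0}^{n-1}(\ell + mp)/\ell = \Gamma(n + mp)\Gamma(n_0)/[\Gamma(n)\Gamma(n_0 + mp)] \sim \Gamma(n_0)/\Gamma(n_0 + mp) \cdot n^{mp}$ yields the middle expression. In particular, the $m=1$ case exhibits $M_n := D_k(n)\prod_{\ell = n_0}^{n - 1}\ell/(\ell + p)$ as a strictly positive martingale, hence $M_n \to M_\infty$ almost surely by Doob; dividing by $\prod_{\ell = n_0}^{n-1}\ell/(\ell + p) \sim \Gamma(n_0+p)/\Gamma(n_0)\cdot n^{-p}$ then produces the a.s.\ limit $n^{-p}D_k(n) \to D_k(\infty) := M_\infty \Gamma(n_0)/\Gamma(n_0+p)$ asserted in \eqref{eq:T33}.

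The $\mathcal L^r$-convergence for every $r \ge 1$ follows from a uniform integrability argument: by Jensen's inequality and the factorial moment formula with $m = \lceil r \rceil$, the sequence $(n^{-p} D_k(n))_n$ is bounded in $\mathcal L^s$ for every $s \ge 1$, hence uniformly integrable in $\mathcal L^r$. Moreover, since $D_k(n) \to \infty$ almost surely (an easy consequence of $\sum_n p D_k(n_0)/n = \infty$, which rules out the event of only finitely many unit jumps), one has $D_k(n)_{\uparrow m}/D_k(n)^m \to 1$ a.s., so that $n^{-mp} D_k(n)_{\uparrow m} \to D_k(\infty)^m$ almost surely and in $\mathcal L^1$, giving the last equality $\lim_n \mathbf E[n^{-mp}D_k(n)_{\uparrow m}] = \mathbf E[D_k(\infty)^m]$ in \eqref{eq:T34}.

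The main obstacle is the almost sure strict positivity of $D_k(\infty)$, since martingale theory alone yields only $\mathbf E[D_k(\infty)] > 0$. My plan here is a continuous-time embedding: attach an independent rate-$1$ Poisson clock to each vertex, whose rings mark duplication times. Then $(|V_t|)_t$ is a Yule process with $|V_t|/e^t \to Y \sim \mathrm{Gamma}(n_0, 1)$ a.s., while $(D_k(t))_t$ is itself a Yule process of rate $p$ starting from $a := D_k(n_0)$, giving $e^{-pt} D_k(t) \to G \sim \mathrm{Gamma}(a, 1)$ a.s. The discrete-time PDn process is recovered by sampling the continuous-time model at the jump times $T(n) = \inf\{t : |V_t| = n\}$; since $T(n) = \log(n/Y) + o(1)$ a.s., a direct time-change computation gives $n^{-p} D_k(n) \to Y^{-p} G$ almost surely. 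Both $Y$ and $G$ are strictly positive, so $D_k(\infty) > 0$ almost surely, completing the proof.
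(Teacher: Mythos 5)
Your proof is correct, and while the formula \eqref{eq:T32}, the rising-factorial moment recursion and the martingale/uniform-integrability parts run essentially parallel to the paper (the paper inducts on the distribution function $\mathbf P(D_k(n)\leq\ell)$ rather than the point probabilities, and phrases the moment recursion through $\Gamma(D_k(n)+r)/\Gamma(D_k(n))$ for real $r>-1$, feeding everything into its Lemma~\ref{l:konvergenz}), you take a genuinely different route for the one delicate point, the almost sure positivity of $D_k(\infty)$. The paper stays inside the discrete-time framework: it applies the same recursion with the \emph{negative} exponent $r=-\tfrac12$, so that $n^{p/2}\Gamma(D_k(n)-\tfrac12)/\Gamma(D_k(n))\sim(n^{-p}D_k(n))^{-1/2}$ converges almost surely to an integrable, hence finite, random variable, which forces $D_k(\infty)>0$ a.s. Your alternative is a continuous-time Yule embedding: with rate-$1$ clocks the embedded jump chain is exactly PDn, $(D_k(t))_t$ is a rate-$p$ pure-birth (Yule) process because a neighbour of $k$ rings at rate $D_k(t)$ and the edge is kept with independent probability $p$, and the time change $T(n)=\log(n/Y)+o(1)$ turns the two positive Yule limits into $n^{-p}D_k(n)\to Y^{-p}G>0$ a.s.; since positivity of the limit is a path property of the PDn process, it transfers to any version. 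This buys an explicit representation of the limit (at the cost of importing Yule-process limit theory and checking the embedding), whereas the paper's trick is shorter and self-contained; one caveat worth recording in your write-up is that $Y$ and $G$ are \emph{not} independent, so the representation $Y^{-p}G$ must not be used to compute the moments in \eqref{eq:T34} (its marginals alone would give the wrong answer) --- you correctly obtain those from the exact product formula and uniform integrability instead.
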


\begin{remark}[Degree evolution of arbitrary vertices]
  In the case of $k>n_0$ we can obtain results for the behavior
  of $D_k(n)$ by conditioning on the graph at time $k$, i.e. considering
  $G_k$ as initial graph.
\end{remark}

\begin{remark}[Connection to the P\'olya urn]
  In the special case $p=1$, the sequence $(D_k(n))_{n=n_0,
    n_0+1,...}$ is connected to P\'olya's urn. Note that the degree of
  vertex $v_k$ increases by one at time $n$ iff one of the neighbors of
  $v_k$ is copied. In P\'olya's urn, start with $n_0$ balls,
  where~$D_k(n_0)=a$ balls are red and all others are black. Then, as
  usual, pick a ball from the urn at random, and put it back together
  with a second ball of the same color. From this construction,
  $D_k(n)$ is equal in distribution to the number of red balls when
  there are $n$ balls in the urn for all $n$.
  Of course, it is well-known that in this case, the probability that
  there are $\ell$ red balls in the urn at the time when there are $n$
  balls in total equals
  \begin{align}\label{eq:T32b}
    \binom{n-n_0}{\ell-a}
       \frac{a_{\uparrow(\ell-a)}\cdot(n_0-a)_{\uparrow(n-n_0-\ell+a)}}
            {n_{0\uparrow (n-n_0)}},
  \end{align}
  with $m_{\uparrow k}:=m\cdot(m+1)\cdots(m+k-1)$
  (e.g. (4.2) in \cite{JohnsonKotz1977}).\\
  Using (5) of Chapter 1 of \cite{Riordan1968} we obtain
  \begin{align*}
    \binom{n-n_0}{\ell-a}
     &=\binom{(n-a-1)-(n_0-a-1)}{\ell-a}\\
     &=\sum_{m=0}^{\ell-a}(-1)^m\binom{n-a-1-m}{\ell-a-m}\binom{n_0-a-1}{m}.
  \end{align*}
  From this it follows, that~\eqref{eq:T32b} equals the
  right hand side of~\eqref{eq:T32} if $p=1$.

  \noindent
  Also section 6.3.3 in \cite{JohnsonKotz1977} shows that the
  proportion of red balls in the urn converges to a
  $\beta$-distributed random variable with parameters $a$ and
  $n_0-a$. Therefore it is not surprising that for $p=1$ the moments
  of $D_k(\infty)$ as given in \eqref{eq:T34} match those of the
  $\beta(a,n_0-a)$-distribution.

  \noindent
  The connection of \eqref{eq:T32} to an extension of P\'olya's urn
  would look as follows: Consider an urn, starting with $n_0$ balls,
  $a$ of which are red and $n_0-a$ of which are black. In each step,
  choose a ball at random from the urn. If the ball is black, put it
  back to the urn together with another black ball. If the ball is
  red, put it back to the urn together with another ball. The color of
  the additional ball is red with probability $p$ and black with
  probability $1-p$. Then, the chance that there are $\ell$ red balls
  in the urn at the time when there are a total of $n$ balls in the
  urn equals the right hand side of \eqref{eq:T32}.
\end{remark}

\begin{remark}[The limiting distribution]
  The limiting distribution with moments given by the right
  hand side of~\eqref{eq:T34} seems to be not well--known. Thus far,
  we deduced, using Stirling's formula, that for $p<1$
  \begin{align*}
    \|D_k(\infty)\|_{\mathcal L^\infty}
     &= \lim_{m\to\infty}\|D_k(\infty)\|_{\mathcal L^m}
      = \lim_{m\to\infty}\Bigg(\frac{\Gamma(D_k(n_0)+m)}{\Gamma(n_0+pm)}\Bigg)^{1/m}
      = \infty,
  \end{align*}
  which shows that (in contrast to the special case $p=1$) $D_k(\infty)$
  is not bounded. Also, Stirling's formula shows that the moments satisfy
  \begin{align*}
    \sum_{m=1}^\infty\mathbf E[D_k(\infty)^m]^{-1/(2m)}=\infty,
  \end{align*}
  which is Carleman's condition for the determinacy of the corresponding
  Stieltjes moment problem (e.g. \cite{ShohatTamarkin1943}, Theorem 1.11).
  Thus, the limiting distribution is defined by its moments.
\end{remark}

\section{Preparation}

\subsection{Some recursions}
We collect some simple calculations in this section. Throughout, we
denote by $(\mathcal F_n)_{n=n_0, n_0+1,...}$ the filtration generated
by $\mathcal G = (G_n)_{n=n_0,n_0+1,...}$.

\begin{proposition}[Evolution of $F, H, S, C$\label{P:FHSC}]
  It holds that
  \begin{align}\label{eq:P11}
    \mathbf E[F_k(n+1)|\mathcal F_n] & = F_k(n) + p(k-1)
    F^\circ_{k-1}(n) - pk F^\circ_k(n) \\ & \qquad \qquad \qquad
    \qquad \notag + \sum_{\ell\geq k} F^\circ_\ell(n)
    \binom{\ell}{k} p^k(1-p)^{\ell-k},\\
    \label{eq:P12}    
    \mathbf E[H_q(n+1)|\mathcal F_n] & = H_q(n) -pq(1-q) \frac{d}{ds} H^\circ_s(n)\Big|_{s=q} + H^\circ_{1-p+pq}(n),\\
    \label{eq:P13} 
    \mathbf E[S_k(n+1)|\mathcal F_n] & = \Big(1+\frac{pk+p^k}{n}\Big)S_k(n) + \frac{pk(k-1)}{n}S_{k-1}(n),\\
    \label{eq:P14}
    \mathbf E[C_k(n+1)|\mathcal F_n] & = C_k(n) \Big(1 + \frac{k}{n}
    p^{k-1}\Big).
  \end{align}
\end{proposition}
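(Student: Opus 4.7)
The plan is to establish all four identities by a single first-step computation, decomposing the effect of one duplication step into two pieces: (a) the creation of the new vertex $v'$, whose degree, conditioned on the chosen vertex $v$ having $D_v=\ell$, is $\mathrm{Bin}(\ell,p)$-distributed; and (b) the independent $+1$ shift applied with probability $p$ to the degree of every vertex $w$ adjacent to $v$. The only auxiliary inputs are the identity $\sum_{v\in V_n}\mathbf 1_{\{\{v,w\}\in E_n\}}=D_w$, which handles the averaging over the uniformly chosen $v$, and the standard falling-factorial moment $\mathbf E[X(X-1)\cdots(X-k+1)]=p^k\,\ell(\ell-1)\cdots(\ell-k+1)$ for $X\sim\mathrm{Bin}(\ell,p)$.

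For \eqref{eq:P11}, the contribution of $v'$ to $F_k(n+1)$ averaged over the uniform choice of $v$ is exactly $\sum_{\ell\ge k}F_\ell^\circ(n)\binom{\ell}{k}p^k(1-p)^{\ell-k}$. For the existing vertices, a $w$ with $D_w=k$ leaves $F_k$ (joining $F_{k+1}$) with probability $p$ given that $v$ is adjacent to $w$, and the expected number of such $w$ is $\tfrac{1}{n}\sum_v\sum_{w:\{v,w\}\in E_n}\mathbf 1_{\{D_w=k\}}=\tfrac{1}{n}\sum_w D_w\mathbf 1_{\{D_w=k\}}=kF_k^\circ(n)$; analogous bookkeeping at degree $k-1$ accounts for the $+p(k-1)F_{k-1}^\circ(n)$ term. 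Equation \eqref{eq:P12} then follows by multiplying \eqref{eq:P11} by $q^k$ and summing: the combination $-pkF_k^\circ+p(k-1)F_{k-1}^\circ$ converts, via $\sum_k q^k k F_k^\circ=q\tfrac{d}{dq}H_q^\circ$ and a single index shift $k\mapsto k-1$, into $-pq(1-q)\tfrac{d}{dq}H_q^\circ$, while the binomial-in-$k$ sum collapses to $\sum_\ell F_\ell^\circ(n)(1-p+pq)^\ell=H_{1-p+pq}^\circ(n)$.

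For \eqref{eq:P13} I would argue directly rather than by differentiating \eqref{eq:P12}: $v'$ contributes $p^k S_k^\circ(n)$ by the binomial factorial moment, whereas for each vertex $w$ adjacent to $v$ the increment in $D_w(D_w-1)\cdots(D_w-k+1)$ equals $k\,D_w(D_w-1)\cdots(D_w-k+2)$, so averaging over $v$ via the sum identity above, and then applying the algebraic decomposition $D_w\cdot[D_w(D_w-1)\cdots(D_w-k+2)]=D_w(D_w-1)\cdots(D_w-k+1)+(k-1)D_w(D_w-1)\cdots(D_w-k+2)$, produces exactly $\tfrac{pk}{n}[S_k(n)+(k-1)S_{k-1}(n)]$. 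Finally, \eqref{eq:P14} is the cleanest: every $k$-clique of $G_n$ survives into $G_{n+1}$, and a new $k$-clique necessarily contains $v'$, hence corresponds to a $k$-clique of $G_n$ that contains $v$ together with the $k-1$ edges joining $v$ to its other members all being retained (probability $p^{k-1}$); since $\sum_v(\text{$k$-cliques through }v)=kC_k(n)$, averaging over $v$ gives $\tfrac{kp^{k-1}}{n}C_k(n)$. No step poses a technical obstacle; the only real risk is off-by-one bookkeeping, in particular in rewriting $\sum_v\sum_{w:\{v,w\}\in E_n}$ as $\sum_w D_w(\cdots)$ and in tracking the index shift inside the generating function manipulation for \eqref{eq:P12}.
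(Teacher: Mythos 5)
Your proposal is correct and follows essentially the same route as the paper: a one-step conditional computation in which the new vertex contributes a $\mathrm{Bin}(D_v,p)$ degree (giving the binomial sum in \eqref{eq:P11} and the $p^{k-1}$, resp.\ $p^k$, factors in \eqref{eq:P14} and \eqref{eq:P13}), the neighbors of $v$ each gain $+1$ with probability $p$ (giving the $\pm pk$ terms via $\sum_v\sum_{w:\{v,w\}\in E_n}=\sum_w D_w$), and \eqref{eq:P12} is obtained by multiplying \eqref{eq:P11} by $q^k$ and summing. The only cosmetic difference is \eqref{eq:P13}: you compute the factorial-moment increments directly, whereas the paper sums \eqref{eq:P11} against $k_{\downarrow m}$, but your identity $D_w\cdot D_{w\downarrow(k-1)}=D_{w\downarrow k}+(k-1)D_{w\downarrow(k-1)}$ is exactly the algebraic step used there, so the two arguments coincide in substance.
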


\begin{proof}
  Let us start with \eqref{eq:P11}. The quantity $F_k$ increases in
  two cases: either, a vertex of degree $\ell\geq k$ is copied,
  together with $k$ edges (which has probability $\binom\ell k
  p^k(1-p)^{\ell-k}$), or one of the neighbors of a vertex of degree
  $k-1$ is copied together with the connecting edge. On the other
  hand, $F_k$ decreases by one, if one of the neighbors of a vertex of
  degree $k$ is copied together with the connecting edge. These three
  cases make up the right hand side of~\eqref{eq:P11}.

  \noindent
  For \eqref{eq:P12}, recall the definition of $H_q$. We multiply
  \eqref{eq:P11} by $q^k$ and sum in order to obtain
  \begin{align*}
    \mathbf E[H_q(n+1)&-H_q(n)|\mathcal F_n] = pq^2\sum_{k=1}^\infty
        (k-1)F^\circ_{k-1}(n)q^{k-2} - pq\sum_{k=0}^\infty
        kF^\circ_{k}(n)q^{k-1} \\ & \qquad \qquad \qquad \qquad +
        \sum_{k=0}^\infty\sum_{\ell=k}^\infty
           F^\circ_\ell(n) \binom\ell k p^k(1-p)^{\ell-k} q^k \\
    & = -pq(1-q) \frac{d}{ds}\Big(\sum_{k=0}^\infty F^\circ_k(n) s^k\Big)\Big|_{s=q} +
        \sum_{\ell=0}^\infty F^\circ_\ell(n) (1-p+pq)^\ell \\
    & = -pq(1-q)
        \frac{d}{ds} H^\circ_s(n)\Big|_{s=q} + H^\circ_{1-p+pq}(n).
  \end{align*}
  We now turn to \eqref{eq:P13}. Again, use \eqref{eq:P11}, multiply
  by $k \cdots (k-m+1) =: k_{\downarrow m}$ and sum for
  \begin{align*}
    \mathbf E & [S_m(n+1)-S_m(n)|\mathcal F_n] \\
    & = \sum_{k=m}^\infty\Bigg( pk_{\downarrow m}(k-1)
    F^\circ_{k-1}(n) \\ & \qquad \qquad \qquad \qquad \qquad -
    pk_{\downarrow m} k F^\circ_k(n) + \sum_{\ell=k}^\infty
    F^\circ_\ell(n) k_{\downarrow m} \binom{\ell}{k}
    p^k(1-p)^{\ell-k}\Bigg) \\
    & = p\sum_{k=m}^\infty ((k+1)_{\downarrow m} - k_{\downarrow m})
    kF^\circ_k(n) \\ & \qquad \qquad \qquad \qquad \qquad +
    p^m\sum_{\ell=m}^\infty\sum_{k=m}^ \ell F^\circ_\ell(n)
    \ell_{\downarrow m} \binom{\ell-m}{k-m}
    p^{k-m}(1-p)^{\ell-k}\\
    & = p \sum_{k} \big(mk_{\downarrow m} + m(m-1)k_{\downarrow(m-1)})
    \big)F^\circ_k(n) + p^m \sum_{\ell} F^\circ_\ell(n)
    \ell_{\downarrow m} \\
    & = \frac{pm + p^m}n S_m(n) + \frac{pm(m-1)}{n} S_{m-1}(n),
  \end{align*}
  where we have used that
  \begin{align*}
    k\cdot\big((k&+1)_{\downarrow m}-k_{\downarrow m}\big)\\
    &= \big(k-m+1+(m-1)\big)\cdot k_{\downarrow (m-1)}\big(k+1-(k-m+1)\big)\\
    &= mk_{\downarrow m} + m(m-1)k_{\downarrow (m-1)}.
  \end{align*}
  For \eqref{eq:P14}, a $k$-clique arises if a vertex $v$ which is
  member of a $k$-clique is copied, together with all $k-1$ edges
  connecting $v$ to the other members of the clique. Hence,
  \begin{align*}
    \mathbf E[C_k(n+1)|\mathcal F_n] & = C_k(n) \Big(1 + \frac{k}{n}
    p^{k-1}\Big).
  \end{align*}
\end{proof}

\begin{remark}[Scale-free property\label{rem:chung}]
  \begin{asparaenum}
    \item
      In \cite{ChungEtAl2003}, the authors show the following: If for some
      $b>0$ (necessarily we will have $b>1$) and $c>0$ it holds that
      \begin{align}
        \label{eq:chung}
        \lim_{k\to\infty} k^b \lim_{n\to\infty} \mathbf E[F^\circ_k(n)] =
        c,
      \end{align}
      then, $b$ must satisfy $p(b-1) = 1-p^{b-1}$.

      Let us briefly recall the arguments leading to this power-law
      behavior of the (expected) degree distribution. Starting off
      with~\eqref{eq:P11}, taking expectations on both sides, and setting
      $\mathbf E[F_k(n)] = ck^{-b}n + o(n)$ for some $c,b$, we see that
      for $n\to\infty$, if a stationary state for $\mathbf E[F^\circ_k]$
      is reached,
      \begin{align*}
        ck^{-b} & \stackrel{k\to\infty}\sim p(k-1) c(k-1)^{-b} - pk
        ck^{-b} + c\sum_{\ell\geq k} \ell^{-b} \binom{\ell}k
        p^k(1-p)^{\ell-k}.
      \end{align*}
      Note that $k(1-1/k)^{-b} -k \stackrel{k\to\infty}\sim b$, and (see
      Lemma~2 in \cite{ChungEtAl2003})
      \begin{align*}
        \sum_{\ell\geq k} \ell^{-b} \binom{\ell}k p^k(1-p)^{\ell-k}
        \stackrel{k\to\infty} \sim k^{-b}p^{b-1}.
      \end{align*}
      Therefore, by dividing by $ck^{-b}$, the parameter $b$ must satisfy
      \begin{align}\label{eq:chungRelation}
        1 & = pb - p + p^{b-1},
      \end{align}
      the desired relationship.

  ~ 

      \noindent
      Of course, with this proof \cite{ChungEtAl2003} only show an
      assertion about the scaling exponent $b$ in the case that the
      limiting distribution of $(\mathbf
      E[F_k^\circ(n)])_{k=0,1,2,...}$ satisfies a power law. No
      assertion is made if such a power law exists.  In order to
      resolve this, consider $p\geq p^\ast$ as given in
      Theorem~\ref{T1}, that is $p\geq e^{-p}$.  For such $p$ the
      inequality $px+p^x\geq px+e^{-px}>1$ holds for all $x>0$.  More
      precisely, the desired relationship has a solution $b>1$ if and
      only if $p < p^\ast$. However, in this case we have seen that
      $(\mathbf E[F_k^\circ(n)])\xrightarrow{n\to\infty} \delta_{k0}$
      and so \eqref{eq:chung} cannot hold. We conclude that no
      power--law behavior is possible.
    \item The works on the Pastor--Satorras et al modification we
      mentioned in Remark \ref{rem:relG} claim the same power law
      \eqref{eq:chungRelation} to hold, again with $p(b-1)=1-p^{b-1}$,
      irrespective of $r$ as long as $r>0$ (see Theorem~1 in
      \cite{BebekEtAl2006ip} using arguments similar to those of
      \cite{ChungEtAl2003} and (18) in \cite{KimEtAl2002}, where the
      connection gets clear by multiplying $1-\delta$ and substituting
      $p=1-\delta$). It is also recognized that such a power
      law can only exist for $p\leq p^\ast$. However, the proofs of
      this power law only show stationarity of a degree distribution
      with power law. The question of convergence in the
      sense of~\eqref{eq:chung} still is an open problem.
    \end{asparaenum}
\end{remark}

\subsection{An auxiliary process}
\label{ss:mathcalX}
In the proof of Theorem~\ref{T1}, we will need a piece-wise
deterministic process which we introduce here. There, we will obtain
and use a \emph{duality} (see Subsection \ref{subs:duality}), i.e. a
relationship of the form
$$
  \mathbf E[H_{1-x}(t)] = \mathbf E[H_{1-X_t}(0)|X_0=x]
$$
for continuous-time versions of the probability generating functions
of the degree distributions $H$ and a $[0,1]$-valued process
$\mathcal X = (X_t)_{t\geq0}$, which jumps from $x$ to $px$ at rate~1
and in between jumps follows the logistic equation $\dot X = pX(1-X)$.
Recall that such piece-wise deterministic processes have been studied
recently in more detail; see e.g.\ \cite{Davis1984}, \cite{CostaDufour2008},
\cite{AzaisBardetGenadotKrellZitt2014}.

\begin{lemma}[The auxiliary process $\mathcal X$\label{l:X}]
  Let $p\in[0,1]$ and $\mathcal X = (X_t)_{t\geq 0}$ be a Markov
  process with state space $[0,1]$ and generator
  \begin{align}\label{eq:genX}
    G_{\mathcal X}f(x) = px(1-x)f'(x) + (f(px) - f(x))
  \end{align}
  for $f\in\mathcal C^1_b([0,1])$ and $X_0 \in [0,1]$. In addition,
  let $p^\ast \approx 0.567143$ be the unique solution of $pe^p=1$ (or
  $p + \log p = 0$).

  Then, if $p\leq p^\ast$, it holds that $X_t \xrightarrow{t\to\infty}
  0$ almost surely, whereas if $p>p^\ast$ it holds that $\mathcal X$
  is ergodic and $X_t \xRightarrow{t\to\infty} X_\infty$ for some
  $[0,1]$-valued random variable $X_\infty$ with $\mathbf
  P(X_\infty>0)=1$ and
  $$\mathbf E[X_\infty^k] = \Big(1 - \tfrac 1p \log\big(\tfrac
  1p\big)\Big) \cdot \prod_{\ell=1}^{k-1} \Big( 1 -
  \frac{1-p^\ell}{p\ell}\Big).$$
\end{lemma}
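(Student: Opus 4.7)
The plan is to treat the three regimes $p<p^\ast$, $p=p^\ast$, and $p>p^\ast$ by complementary Lyapunov-type arguments, and then to extract the moments of $X_\infty$ from the stationary identity of the generator.

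For the subcritical regime $p<p^\ast$, the natural test function is $V(x)=x^k$ for small $k>0$. A direct computation with $G_{\mathcal X}$ gives
\[
  G_{\mathcal X}(x^k) = -(1-kp-p^k)\,x^k - kp\,x^{k+1}.
\]
The quantity $\alpha_k:=1-kp-p^k$ vanishes at $k=0$ with derivative $\log(1/p)-p>0$ for $p<p^\ast$, so $\alpha_k>0$ for all sufficiently small $k$. Dynkin's formula then shows that $e^{\alpha_k t}X_t^k$ is a non-negative supermartingale, which yields both the exponential bound $\mathbf E[X_t^k]\leq X_0^k e^{-\alpha_k t}\to 0$ and, via the a.s.\ limit of the supermartingale $X_t^k$, the almost sure convergence $X_t\to 0$. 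The critical case $p=p^\ast$ is more delicate since then $\alpha_k\leq 0$ for every $k>0$. Here I would instead use $V(x)=-\log x$, for which $G_{\mathcal X}V(x) = \log(1/p)-p+px = p^\ast x$ at $p=p^\ast$. Combining this with the first-moment ODE $\tfrac{d}{dt}\mathbf E[X_t] = (2p-1)\mathbf E[X_t]-p\mathbf E[X_t^2]$ and Jensen's inequality gives $\mathbf E[X_t]\to 0$, and the almost sure statement is then recovered from a subsubsequence argument together with the drift estimate above.

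For the supercritical regime $p>p^\ast$, I would establish ergodicity via a Foster--Lyapunov drift criterion for piecewise deterministic Markov processes as in \cite{Davis1984,CostaDufour2008}. The same $V(x)=-\log x$ now satisfies $G_{\mathcal X}V(x) = \log(1/p)-p+px$, which is strictly negative on a neighborhood of $0$ (because $\log(1/p)-p<0$ precisely when $p>p^\ast$), providing the required drift condition outside a compact subset of $(0,1]$. Together with the minorization supplied by the Poisson jump clock, this yields a unique invariant distribution $\mu$ to which the semigroup converges, so $X_t\Rightarrow X_\infty\sim\mu$; positivity $\mathbf P(X_\infty>0)=1$ follows from the fact that the continuous logistic drift pushes the process away from the boundary $0$ immediately whenever it is visited.

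The moments of $X_\infty$ come from the stationary identity $\mathbf E[G_{\mathcal X}f(X_\infty)]=0$. Applied to $f(x)=\log x$ it gives $p(1-\mathbf E[X_\infty])=\log(1/p)$, i.e.\ $\mathbf E[X_\infty]=1-\tfrac 1p\log(1/p)$; applied to $f(x)=x^k$ it yields the recursion
\[
  \mathbf E[X_\infty^{k+1}] = \Big(1-\tfrac{1-p^k}{kp}\Big)\mathbf E[X_\infty^k],
\]
which iterated produces the stated product formula. The main obstacles I expect are the rigorous handling of the critical case (where the $x^k$-supermartingale argument degenerates) and the justification of the unbounded test function $\log x$ in the stationary identity, which requires an a priori integrability bound $\mathbf E[|\log X_\infty|]<\infty$ obtainable from the Lyapunov drift above.
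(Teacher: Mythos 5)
Your treatment of the critical case $p=p^\ast$ has a genuine gap. The first-moment equation $\tfrac{d}{dt}\mathbf E[X_t]=(2p-1)\mathbf E[X_t]-p\mathbf E[X_t^2]$ is correct, but Jensen's inequality gives $\mathbf E[X_t^2]\geq(\mathbf E[X_t])^2$, hence only the \emph{upper} differential bound $\dot m\leq(2p-1)m-pm^2$ for $m(t)=\mathbf E[X_t]$. Since $p^\ast>1/2$, the comparison logistic ODE has the stable positive equilibrium $(2p-1)/p=2-1/p^\ast\approx0.236$, so this inequality is perfectly compatible with $m(t)$ staying bounded away from $0$ and yields no conclusion; to argue decay you would need an upper bound on $\mathbf E[X_t^2]$ in terms of $m$, which Jensen does not provide. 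Moreover, even if $\mathbf E[X_t]\to0$ were established, a sub(sub)sequence argument only upgrades this to convergence in probability or a.s. convergence along a subsequence; without monotonicity or a supermartingale structure at $p=p^\ast$ (and your device $e^{\alpha_k t}X_t^k$ is unavailable there, since $\alpha_k=1-kp-p^k<0$ for every $k>0$ at the critical point) full almost sure convergence does not follow. The missing idea, which is how the paper closes this case, is a monotone coupling in the parameter: driving the processes by the same jump times gives $X_t^{(p^\ast)}\leq X_t^{(p)}$ for $p>p^\ast$, whence $\mathbf E[\limsup_{t\to\infty}X_t^{(p^\ast)}]\leq\inf_{p>p^\ast}\mathbf E[X_\infty^{(p)}]=\inf_{p>p^\ast}\bigl(1-\tfrac1p\log\tfrac1p\bigr)=0$.

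The remainder of your proposal is essentially sound, though it differs from the paper. For $p<p^\ast$, the computation $G_{\mathcal X}(x^k)=-(1-kp-p^k)x^k-kp\,x^{k+1}$ and the supermartingale $e^{\alpha_k t}X_t^k$ for small $k>0$ give both moment decay and a.s. convergence; this is more direct than the paper's coupling of $-\log X_t$ with the drifted Poisson process $U_t=U_0-pt+\log(1/p)P_t$, with the minor caveat that $x\mapsto x^k$, $0<k<1$, is not in $\mathcal C^1_b([0,1])$, so you need the extended generator or an approximation (using that $X_t>0$ for all $t$ whenever $X_0>0$). For $p>p^\ast$, the paper verifies Davis's hitting-time criterion by hand via a comparison process and optional stopping, whereas you invoke a Foster--Lyapunov criterion with $V(x)=-\log x$; this is a legitimate route, but the irreducibility/petite-set (minorization) step must actually be carried out for a PDMP whose only randomness is the jump clock, rather than asserted. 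The moment identities are correct: the stationary relation applied to $x^k$ gives exactly the stated recursion, and applying it to $\log x$ (justified by $\mathbf E[\lvert\log X_\infty\rvert]<\infty$ from the drift bound) is a clean shortcut to $\mathbf E[X_\infty]=1-\tfrac1p\log\tfrac1p$, where the paper instead uses the ergodic theorem together with the martingale associated with $-\log X_t$.
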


\begin{proof}
  We consider the process $-\log\mathcal X = (-\log X_t)_{t\geq 0}$
  with state space $[0,\infty)$. From \eqref{eq:genX}, we read off
  that this process has the generator
  $$ G_{-\log \mathcal X}g(y) = -p(1-e^{-y}) g'(y) + g(y + \log(1/p)) - g(y).$$
  In other words, $-\log\mathcal X$ decreases at rate $p(1-e^{-y})$
  at time $t$ if $-\log X_t$ equals $y$ and increases by $\log(1/p)$
  at the times of a Poisson process. Note that $X_t \xrightarrow{t\to\infty} 0$
  iff $-\log X_t \xrightarrow{t\to\infty} \infty$.

  We start with the case $p<p^\ast$. Here, we can couple the process
  $-\log \mathcal X$ with a process $\mathcal U = (U_t)_{t\geq 0}$
  with generator 
  $$ G_{\mathcal U} g(y) =  -p g'(y) + g(y + \log(1/p)) - g(y)$$
  by using the same Poisson processes for $-\log\mathcal X$ and
  $\mathcal U$. Since $1-e^{-y} \leq 1$, we have that $U_t\leq -\log
  X_t$. However, we can write $\mathcal U$ as
  $$ U_t = U_0 - pt + \log(1/p) P_t$$
  for some unit-rate Poisson process $\mathcal P = (P_t)_{t\geq 0}$
  and by the law of large numbers for Poisson processes (i.e.\
  $\tfrac{P_t}{t} \xrightarrow{t\to\infty} 1$ almost surely), we see
  that $U_t\xrightarrow{t\to\infty} \infty$ almost surely, if
  $\log(1/p)>p$ or $p<p^\ast$. Since $U_t\leq -\log X_t$, this implies
  $-\log X_t\xrightarrow{t\to\infty} \infty$ or
  $X_t\xrightarrow{t\to\infty} 0$, as claimed.

  Now, we turn to the case $p>p^\ast$. First, we have to prove
  ergodicity of $-\log\mathcal X$ (which is equal to ergodicity of
  $\mathcal X$). Let $T_z := T_z^{-\log\mathcal X} := \inf\{t\geq 0:
  -\log X_t = z\}$. According to \cite{Davis1983}, Theorem 3.10, we
  have to show that (i) there is $z\geq 0$ such that $\mathbf
  E[T_z|-\log X_0=z] < \infty$ and (ii) $\mathbf P(T_z<\infty|-\log
  X_0 = x) = 1$ for all $x\geq 0$.

  Let $z$ be large enough such that
  $$p_z := p(1-e^{-z}) > \log(1/p).$$
  We define
  $$S_{(z, z+\log(1/p)]}:=\inf\{t:z<-\log X_t\leq z+\log(1/p)\}.$$
  Then, $\mathbf E[S_{(z, z+\log(1/p)]} | -\log X_0 = x] < \infty$
  for all $x\leq z$. Indeed, the probability for at least
  $z/\log(1/p)$ jumps in some small time interval of length
  $\varepsilon>0$ is positive. After the first such time interval we
  can be sure that $S_{(z, z+\log(1/p)]}$ has occurred. By finitness of
  first moments of geometric distributions,
  $\mathbf E[S_{(z, z+\log(1/p)]} | -\log X_0 = x] < \infty$ follows.
  By a restart argument, we have to show that
  $\mathbf E[T_z | -\log X_0 = x] < \infty$ for all
  $z<x\leq z + \log(1/p)$, which will be done by using a comparison
  argument. For this, let $\mathcal R = (R_t)_{t\geq 0}$ be a process
  with generator
  $$ G_{\mathcal R}g(y) = -p_z g'(y) + g(y + \log(1/p)) - g(y).$$
  If $z < R_0 = -\log X_0 \leq z + \log(1/p)$, then -- using the same
  Poisson processes for $-\log\mathcal X$ and $\mathcal R$ -- we have
  that $T_z \leq T_z^{\mathcal R} := \inf\{t\geq 0:
  R_t = z\}$ since $p(1-e^{-y}) \geq p_z$ for $y\geq z$. Since $(R_t -
  R_0 + t(p_z - \log(1/p)))_{t\geq 0}$ is a martingale and
  $T_z^{\mathcal R}<\infty$ almost surely, we have by optional
  stopping that $\mathbf E[R_0 - R_{T^{\mathcal R}_z}] = R_0 - z =
  (p_z - \log(1/p)) \mathbf E[T_z^{\mathcal R}]$, hence $\mathbf
  E[T_z | -\log X_0 = x] \leq \log(1/p)/(p_z - \log(1/p)) < \infty$.
  It is now straight-forward to obtain the
  properties (i) and (ii) and we see that $-\log\mathcal X$ is
  ergodic. In particular, $-\log X_\infty < \infty$, i.e.\ $X_\infty >
  0$ almost surely.

  By the ergodic Theorem, we have that $\tfrac 1t \int_0^t 1-X_s ds
  \xrightarrow{t\to\infty} 1 - \mathbf E[X_\infty]$. This can be used
  when we study the martingale $\big(-\log X_t + \log X_0 - \int_0^t
  \log(1/p) - p(1-X_s)ds\big)_{t\geq 0}$. By dividing by $t$ and
  ergodicity, we see that
  \begin{align*}
    0 & = \lim_{t\to\infty} \frac 1t \log (X_0/X_t) - \frac 1t
    \int_0^t \log(1/p) - p(1-X_s)ds \\ & = - \log(1/p) + p(1-\mathbf
    E[X_\infty]), \intertext{i.e.}  \mathbf E[X_\infty] & = 1 - \tfrac
    1p \log(1/p).
  \end{align*}
  Now, since $\mathbf E[G_{-\log\mathcal X}f(X_\infty)]=0$, we find
  that for $f(x)=e^{-kx}$
  \begin{align*} 
    0 & = \mathbf E[kp (1-X_\infty)X^k_\infty + X^k_\infty(p^k -1)] \\ & =
    -pk \mathbf E[X_\infty^{k+1}] + (pk + p^k -1)\mathbf E[X^k_\infty]
    \intertext{or} \mathbf E[X_\infty^{k+1}] & = \frac{pk + p^k-1}{pk}
    \mathbf E[X_\infty^k] = \Big(1 - \frac{1-p^k}{pk}\Big) \mathbf
    E[X_\infty^k].
  \end{align*}
  By induction, we see that
  \begin{align*}
    \mathbf E[X_\infty^k] & = \Big(1 - \tfrac 1p \log\big(\tfrac
    1p\big)\Big) \cdot \prod_{\ell=1}^{k-1} \Big( 1 -
    \frac{1-p^\ell}{p\ell}\Big).
  \end{align*}
  Last, we consider the case $p=p^\ast$. Let $X_t^{(p)}$ be the Markov
  process with generator \eqref{eq:genX} for a specific value of
  $p$. If $p\mapsto X_0^{(p)}$ is constant, we can couple these processes
  by using the same jump times such that $X_t^{(p)} \leq X_t^{(p')}$ for
  $p<p'$. Therefore, 
  \begin{align*}
    0 & \leq \mathbf E[\limsup_{t\to\infty} X^{(p^\ast)}_t] \leq
    \inf_{p > p^\ast} \mathbf E[\limsup_{t\to\infty} X^{(p)}_t] = \inf_{p
      > p^\ast} \mathbf E[X^{(p)}_\infty] \\ & = \inf_{p > p^\ast}
    \Big(1 - \tfrac 1p\log\Big(\tfrac 1p\Big)\Big) = 0.
  \end{align*}
  Hence, $\limsup_{t\to\infty} X^{(p^\ast)}_t = \lim_{t\to\infty}
  X^{(p^\ast)}_t = 0$, almost surely.
\end{proof}

\subsection{Martingales, the Gamma function and a recursion}
We prepare some facts needed in the proofs of Theorems~\ref{T2}
and~\ref{T3}.

\begin{lemma}[Asymptotics for the Gamma function\label{l:gamma}]
  Let $n_0\geq 0$ and $a>-t_0$. Then,
  $$
    \prod_{k=n_0}^{n-1}\frac{k+a}{k}
      = \frac{\Gamma(n+a)}{\Gamma(n)}\cdot\frac{\Gamma(n_0)}{\Gamma(n_0+a)}
      \stackrel{n\to\infty}\sim \frac{n^a\Gamma(n_0)}{\Gamma(n_0+a)}.$$
\end{lemma}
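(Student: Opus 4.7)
The plan is to establish the identity first and then the asymptotic, both being consequences of standard facts about the Gamma function. For the identity, I would iterate the functional equation $\Gamma(x+1) = x\Gamma(x)$ starting from $\Gamma(n_0+a)$ and from $\Gamma(n_0)$, which gives the two telescoping identities
$$\Gamma(n+a) = \Gamma(n_0+a) \prod_{k=n_0}^{n-1}(k+a) \quad\text{and}\quad \Gamma(n) = \Gamma(n_0) \prod_{k=n_0}^{n-1} k.$$
Dividing the first by the second and rearranging immediately yields the closed form $\prod_{k=n_0}^{n-1}(k+a)/k = \Gamma(n+a)\Gamma(n_0)/(\Gamma(n)\Gamma(n_0+a))$. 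The assumption on $a$ (which I read as $a>-n_0$) guarantees that every factor $k+a$ with $k\geq n_0$ is strictly positive, so no pole of $\Gamma$ is crossed in the iteration.

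For the asymptotic, I would apply Stirling's formula $\Gamma(x)\sim \sqrt{2\pi}\, x^{x-1/2}e^{-x}$ to the only $n$-dependent piece, $\Gamma(n+a)/\Gamma(n)$. After cancellation of the $\sqrt{2\pi}$ factors this reduces to
$$\frac{\Gamma(n+a)}{\Gamma(n)}\sim \frac{(n+a)^{n+a-1/2}}{n^{n-1/2}}\,e^{-a} = n^a\cdot\Bigl(1+\tfrac{a}{n}\Bigr)^{n+a-1/2}\cdot e^{-a}.$$
Since $(1+a/n)^{n+a-1/2}\to e^{a}$ as $n\to\infty$, the two exponentials cancel and the ratio is asymptotic to $n^a$. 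Multiplying by the $n$-independent constant $\Gamma(n_0)/\Gamma(n_0+a)$ produces the claimed equivalence.

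There is no genuine obstacle here; the argument is bookkeeping. The only minor care required is keeping track of the half-integer shift $n+a-1/2$ when invoking the elementary limit $(1+a/n)^n\to e^a$ (which is needed so that the factors $e^{\pm a}$ cancel exactly) and checking that $\Gamma(n_0+a)$ is well-defined, both of which follow from the hypothesis on $a$.
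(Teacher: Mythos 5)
Your proof is correct. The first half coincides with the paper's argument: both you and the authors obtain the identity by iterating the functional equation $x\Gamma(x)=\Gamma(x+1)$ (and you rightly read the hypothesis as $a>-n_0$, which keeps every factor $k+a$ positive and $\Gamma(n_0+a)$ finite). For the asymptotics the routes differ: the paper does not prove anything but simply cites the handbook formula $\Gamma(n+a)/\Gamma(n)\sim n^a$ (Abramowitz--Stegun 6.1.46), whereas you derive this ratio asymptotic directly from Stirling's formula, writing $(n+a)^{n+a-1/2}=n^{n+a-1/2}(1+a/n)^{n+a-1/2}$ and using $(1+a/n)^{n+a-1/2}\to e^a$ so that the exponential factors cancel; since $a$ is fixed, applying Stirling separately to $\Gamma(n+a)$ and $\Gamma(n)$ is legitimate, and the bookkeeping with the half-integer exponent is handled correctly. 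Your version buys self-containedness at the cost of a few extra lines; the paper's citation buys brevity by outsourcing exactly the fact you reproved. (One cosmetic point, present already in the statement rather than in your argument: the hypothesis should read $n_0\geq 1$ and $a>-n_0$, since $\Gamma(n_0)$ and the factor $k=n_0$ in the denominator require $n_0>0$.)
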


\begin{proof}
  The first identity follows by iterating the functional equation
  $x\Gamma(x) = \Gamma(x+1)$ and for the asymptotics see e.g.\ \cite{abramowitz},
  6.1.46.
\end{proof}

\begin{lemma}[Martingale estimates\label{l:konvergenz}]
  \sloppy Let $\mathcal X = (X_n)_{n=n_1,n_0+1,...}$ be a
  non-negative, integrable stochastic process, adapted to a filtration
  $\mathcal F := (\mathcal{F}_n)_{n=n_0, n_0+1,...}$,
  $\mathcal{F}_\infty :=\sigma\Big(\bigcup\limits_{n = n_0}^\infty
  \mathcal{F}_n\Big)$ and $x_0:=\mathbf{E}[X_{n_0}]>0$. Moreover, let
  $a>-n_0$ and assume that
  $$ \mathbf{E}\left[X_{n+1}| \mathcal{F}_n\right] = \Big(1+\frac an\Big)X_n$$
  for all $n=n_0, n_0+1,...$ Then, the following holds:
  \begin{asparaenum}
  \item The process $\mathcal M=(M_n)_{n\geq n_0}$ defined by $M_{n_0} = X_{n_0}$
    and
    \begin{align*}
      M_n = X_n\cdot\prod_{k=n_0}^{n-1} \frac{k}{k+a}
    \end{align*}
    is an $\mathcal F$-martingale and the expectations of the $X_n$ hold
    \begin{align}
      \label{eq:mar1}
      \mathbf E[X_n] = x_0\cdot\prod_{k=n_0}^{n-1} \frac{k+a}{k}
      \stackrel{n\to\infty}\sim
      \frac{x_0\Gamma(n_0)}{\Gamma(n_0+a)}\cdot n^a.
    \end{align}
  \item There is a non-negative random variable $X_\infty
    \in\mathcal{L}^1(\mathcal{F}_\infty)$ with
    $\mathbf{E}[X_\infty]\leq x_0 \Gamma(n_0)/\Gamma(n_0+a)$ such that
    $$n^{-a}X_n
    \xrightarrow{n\to\infty} X_\infty \text{ almost surely.}$$ 
  \item If, in addition to 2., $\mathbf{E}[X_n^r]=O(n^{ar})$ for some
    $r>1$, then the convergence also holds in $\mathcal L^r$ and thus
    $\mathbf P(X_\infty>0)>0$.
  \end{asparaenum}
\end{lemma}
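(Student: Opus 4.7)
The plan is to treat all three assertions via the single auxiliary martingale $\mathcal{M}$, since its existence essentially encodes the multiplicative drift $(1+a/n)$ governing $\mathcal{X}$.

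For part~1, I would verify the martingale property directly: conditional on $\mathcal{F}_n$, $\mathbf{E}[M_{n+1}\mid\mathcal{F}_n]$ equals $(1+a/n)X_n$ times $\prod_{k=n_0}^{n}k/(k+a)$, and the extra factor $n/(n+a)$ cancels against $(n+a)/n$, leaving $M_n$. Taking expectations gives $\mathbf{E}[X_n]=x_0\prod_{k=n_0}^{n-1}(k+a)/k$, and the asymptotic in~\eqref{eq:mar1} is then read off from Lemma~\ref{l:gamma}. I do need $a>-n_0$ here so that every factor in the product is strictly positive; this is exactly the hypothesis that keeps $\mathcal{M}$ non-negative and well-defined.

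For part~2, the non-negativity of $\mathcal{M}$ combined with the martingale property gives almost sure convergence $M_n\to M_\infty\in\mathcal{L}^1(\mathcal{F}_\infty)$ with $\mathbf{E}[M_\infty]\leq \mathbf{E}[M_{n_0}]=x_0$ by Fatou. The relation $n^{-a}X_n = M_n\cdot n^{-a}\prod_{k=n_0}^{n-1}(k+a)/k$, together with Lemma~\ref{l:gamma}, shows that the deterministic prefactor converges to $\Gamma(n_0)/\Gamma(n_0+a)$. Hence $n^{-a}X_n$ converges almost surely to $X_\infty := M_\infty \cdot \Gamma(n_0)/\Gamma(n_0+a)$, and the bound on $\mathbf{E}[X_\infty]$ follows immediately.

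For part~3, the additional hypothesis $\mathbf{E}[X_n^r]=O(n^{ar})$ translates, via the same Gamma-asymptotic, into $\sup_n\mathbf{E}[M_n^r]<\infty$, so $\mathcal{M}$ is bounded in $\mathcal{L}^r$. For $r>1$ this forces uniform integrability, hence $M_n\to M_\infty$ in $\mathcal{L}^r$, from which the corresponding $\mathcal{L}^r$-convergence of $n^{-a}X_n$ to $X_\infty$ follows after multiplying by the deterministic prefactor (which is bounded). Crucially, $\mathcal{L}^1$-convergence upgrades Fatou's inequality to the equality $\mathbf{E}[M_\infty]=x_0>0$, so $M_\infty$, and therefore $X_\infty$, cannot vanish almost surely, giving $\mathbf{P}(X_\infty>0)>0$.

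I do not expect a genuine obstacle here: the argument is a textbook application of the non-negative martingale convergence theorem combined with $\mathcal{L}^r$-boundedness. The only point requiring a little care is bookkeeping the product $\prod_{k=n_0}^{n-1}(k+a)/k$ and its asymptotic, which is already done in Lemma~\ref{l:gamma} and used in the identification of $X_\infty$ and the constant $\Gamma(n_0)/\Gamma(n_0+a)$ appearing in the expectation bound.
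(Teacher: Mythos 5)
Your proposal is correct and follows essentially the same route as the paper: the same auxiliary martingale $\mathcal M$, the non-negative martingale convergence theorem plus the Gamma-asymptotics of Lemma~\ref{l:gamma} for parts~1 and~2, and $\mathcal L^r$-boundedness of $\mathcal M$ (hence $\mathcal L^r$- and in particular $\mathcal L^1$-convergence, giving $\mathbf E[M_\infty]=x_0>0$) for part~3.
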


\begin{proof}
  By the assumptions and its definition it is easy to see that
  $\mathcal M$ is a martingale. Thus, the equality in \eqref{eq:mar1}
  follows by induction and the asymptotic expansion is a consequence
  of Lemma~\ref{l:gamma}.

  Since $\mathcal M$ is non-negative, it converges almost surely to
  some random variable $M_\infty\in\mathcal L^1(\mathcal F_\infty)$
  with $\mathbf E[M_\infty]\leq\mathbf E[M_{n_0}]$ and hence,
  $$ n^{-a} X_n = n^{-a} M_n \cdot \prod_{k=n_0}^{n-1}\frac{k+a}{k} 
  \xrightarrow{n\to\infty}M_\infty \cdot
  \frac{\Gamma(n_0)}{\Gamma(n_0+a)} =: X_\infty$$ almost surely. The
  convergence is also in $\mathcal L^r$ if $\mathcal M$ is $\mathcal
  L^r$-bounded. We compute, using $\mathbf E[X_n^r] \leq cn^{ar}$
  and Lemma~\ref{l:gamma},
  \begin{align*}
    \sup_{n}\mathbf{E}[M_n^r] =
    \sup_{n}\mathbf{E}[X_n^r]\Big(\prod\limits_{k=n_0}^{n-1}\frac{k}{k+a}\Big)^r
    \leq
    \sup_{n}\frac{\Gamma(n_0+a)^rcn^{ar}}{\Gamma(n_0)^rc'n^{ar}} <\infty,
  \end{align*}
  which shows the assertion. In particular, $\mathcal M$ converges in $\mathcal L^1$
  which gives us $\mathbf E[M_\infty]>0$ concluding the proof.
\end{proof}

\begin{lemma}[Recursions]\label{l:recursions}
  Let $n_0>0,a>-n_0,\varepsilon>0$ and $f,g:\{n_0,n_0+1,\ldots\}\to(0,\infty)$,
  satisfying
  \begin{align}
    f(n+1)=\Big(1+\frac an\Big)f(n)+\frac {g(n)}n \label{eq:l-rec1}
  \end{align}
  for all $n\geq n_0$. Then, for $n\to\infty$\\
  \begin{asparaenum}
    \item If $g=O(n^{a-\varepsilon})$, then $f=\Theta(n^a)$.\\[-0.5em]
    \item If $g=O(n^a)$, then $f=O(n^a\log n)$, and
          if $g=\Omega(n^a)$, then $f=\Omega(n^a\log n)$.\\[-0.5em]
    \item If $g=O(n^{a+\varepsilon})$, then $f=O(n^{a+\varepsilon})$, and
          if $g=\Omega(n^{a+\varepsilon})$, then $f=\Omega(n^{a+\varepsilon})$.\\[-0.5em]
  \end{asparaenum}
\end{lemma}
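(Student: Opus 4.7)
My plan is to solve the recursion \eqref{eq:l-rec1} in closed form via an integrating-factor argument, and then read off the three asymptotic regimes from the resulting integral-type expression.

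\textbf{Step 1: Closed form.} Mirroring the martingale construction in Lemma~\ref{l:konvergenz}, set $\pi(n):=\prod_{k=n_0}^{n-1}\tfrac{k+a}{k}$ and $M(n):=f(n)/\pi(n)$. Multiplying \eqref{eq:l-rec1} by $1/\pi(n+1)=\tfrac{n}{n+a}\cdot 1/\pi(n)$ makes the linear part telescope, and I obtain
\begin{align*}
  M(n+1)-M(n) \;=\; \frac{g(n)}{n}\cdot\frac{1}{\pi(n+1)}.
\end{align*}
Summing from $n_0$ to $n-1$ and multiplying back by $\pi(n)$ yields the explicit solution
\begin{align*}
  f(n) \;=\; \pi(n)\Bigl(f(n_0)+\sum_{k=n_0}^{n-1}\frac{g(k)}{k\,\pi(k+1)}\Bigr).
\end{align*}
By Lemma~\ref{l:gamma}, $\pi(n)\sim \tfrac{\Gamma(n_0)}{\Gamma(n_0+a)}n^a$ and $1/\pi(k+1)\sim\tfrac{\Gamma(n_0+a)}{\Gamma(n_0)}k^{-a}$, so there exist positive constants $c_1,c_2$ with
\begin{align*}
  f(n) \;\asymp\; n^a\Bigl(1+\sum_{k=n_0}^{n-1} g(k)\,k^{-a-1}\Bigr),
\end{align*}
where the implicit constants are uniform in $n$. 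Since $g>0$ and $f(n_0)>0$, the sequence $M(n)$ is non-decreasing and bounded below by $M(n_0)>0$, so $f(n)\ge c\,n^a$ holds automatically.

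\textbf{Step 2: Reading off the three cases.} All three statements now reduce to estimating $\Sigma(n):=\sum_{k=n_0}^{n-1}g(k)k^{-a-1}$. In case~1, $g(k)=O(k^{a-\varepsilon})$ gives $g(k)k^{-a-1}=O(k^{-1-\varepsilon})$, so $\Sigma(n)$ is bounded and $f(n)=\Theta(n^a)$ (the lower bound coming from the remark above). In case~2, $g(k)=O(k^a)$ gives $\Sigma(n)=O(\log n)$, hence $f(n)=O(n^a\log n)$; and $g(k)=\Omega(k^a)$ gives $\Sigma(n)\ge c\sum_{k=n_0}^{n-1}k^{-1}=\Omega(\log n)$, hence $f(n)=\Omega(n^a\log n)$. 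In case~3, $g(k)=O(k^{a+\varepsilon})$ gives $\Sigma(n)=O(\sum k^{\varepsilon-1})=O(n^\varepsilon)$, and the $\Omega$-direction follows identically since $\sum_{k=n_0}^{n-1}k^{\varepsilon-1}=\Theta(n^\varepsilon)$.

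\textbf{Main obstacle.} There is no real obstacle: once the integrating factor $\pi(n)$ is in hand, everything is Abel summation plus Lemma~\ref{l:gamma}. The only point requiring a pinch of care is the lower bound in case~1, where the tail sum $\Sigma(n)$ may converge to $0$ or something small, so the leading $n^a$ term must come from the initial value $f(n_0)>0$ via the monotonicity of $M$; this is precisely what the positivity hypothesis on $f$ is there to guarantee.
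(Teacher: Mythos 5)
Your proposal is correct and follows essentially the same route as the paper: solving the recursion explicitly (your $M(n)=f(n)/\pi(n)$ is exactly the paper's iteration, yielding the identical closed form $f(n)=\pi(n)\bigl(f(n_0)+\sum_k g(k)/(k\,\pi(k+1))\bigr)$), then applying Lemma~\ref{l:gamma} to reduce everything to the sum $\sum_k g(k)k^{-a-1}$, which is estimated by standard comparison in the three regimes. Your treatment of the lower bound $f=\Omega(n^a)$ via positivity matches the paper's remark as well, so there is nothing to add.
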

\begin{proof}
  At first note that from Lemma \ref{l:gamma} and the positivity of $g$
  we easily obtain $f=\Omega(n^a)$ in any case. Iteration of \eqref{eq:l-rec1}
  gives us
  \begin{align}\notag
    f(n)
      &= f(n_0)\prod_{k=n_0}^{n-1}\Big(1+\frac ak\Big)
        + \sum_{k=n_0}^{n-1}\frac{g(k)}k\prod_{\ell=k+1}^{n-1}\Big(1+\frac a\ell\Big)\\
      \label{eq:l-rec2}
      &=\prod_{k=n_0}^{n-1}\frac{k+a}k\cdot
        \Bigg(
          f(n_0)
        + \sum_{k=n_0}^{n-1}\frac{g(k)}k\prod_{m=n_0}^{k}\frac m{m+a}
        \Bigg).
  \end{align}
  Lemma \ref{l:gamma} provides constants $c_0,c_1,c_2>0$ which hold
  \begin{align}
  \eqref{eq:l-rec2}
    \leq c_0n^a\Big(f(n_0)+\sum_{k=n_0}^{n-1}\frac{g(k)}kc_1k^{-a}\Big)
    \leq c_2n^a\sum_{k=n_0}^{n-1}\frac{g(k)}{k^{a+1}}.\label{ineq:l-rec}
  \end{align}
  Now 1. and the first parts of 2. and 3. follow immediately by considering
  a suitable integral as upper bound for the sum. Lastly, note that
  Lemma \ref{l:gamma} also provides constants $c_0,c_1,c_2$ which satisfy the
  respective lower bounds in \eqref{ineq:l-rec}, such that the remaining
  claims follow analogously.
\end{proof}
\begin{corollary}\label{co:rec}
  If, in Lemma \ref{l:recursions}, $g$ is $O(n^b)$, then
  $f=o\big(n^{\max\{a,b\}+\varepsilon}\big)$
  for all $\varepsilon>0$.
\end{corollary}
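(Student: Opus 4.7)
The plan is to deduce the corollary from Lemma~\ref{l:recursions} by a case analysis on how $b$ compares to $a$, exploiting that the $\varepsilon$ appearing in the statement of that lemma is a free parameter that may be chosen afresh in each invocation. The target bound $o(n^{\max\{a,b\}+\varepsilon})$ is just a uniform weakening of the three possible conclusions of the lemma (growth like $n^a$, $n^a\log n$, or $n^{a+\varepsilon}$), so no new recursive manipulation of \eqref{eq:l-rec1} should be required; the proof should be essentially a one-line invocation in each case.

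Concretely, I would proceed as follows. If $b<a$, set $\varepsilon' := a-b > 0$ and note that $g(n) = O(n^b) = O(n^{a-\varepsilon'})$; then part~1 of Lemma~\ref{l:recursions} (applied with $\varepsilon'$ in place of $\varepsilon$) yields $f(n) = \Theta(n^a)$, which is $o(n^{a+\varepsilon}) = o(n^{\max\{a,b\}+\varepsilon})$ for every $\varepsilon > 0$. If $b = a$, then $g(n) = O(n^a)$, so part~2 gives $f(n) = O(n^a\log n)$, and $\log n = o(n^\varepsilon)$ concludes this case. If $b > a$, set $\varepsilon'' := b-a > 0$; then $g(n) = O(n^b) = O(n^{a+\varepsilon''})$, and part~3 applied with $\varepsilon''$ in place of $\varepsilon$ gives $f(n) = O(n^{a+\varepsilon''}) = O(n^b) = o(n^{b+\varepsilon})$. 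Since $\max\{a,b\}=a$ in the first two cases and $\max\{a,b\}=b$ in the third, all three cases deliver the same conclusion.

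I do not expect any real obstacle here; the entire content is packaged inside Lemma~\ref{l:recursions}, and the corollary simply exposes its common denominator. The one pitfall to guard against is a notational collision between the $\varepsilon$ of the lemma and the $\varepsilon$ of the corollary: each case above must pick its own value of the lemma's $\varepsilon$ (namely $a-b$, nothing, or $b-a$), which is legitimate because the lemma holds for all positive $\varepsilon$.
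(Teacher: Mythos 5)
Your case analysis is correct and is exactly the argument the paper intends: the corollary is stated without proof as an immediate consequence of Lemma~\ref{l:recursions}, and your three invocations (part~1 with $\varepsilon'=a-b$, part~2 for $b=a$, part~3 with $\varepsilon''=b-a$) together with $\log n=o(n^\varepsilon)$ deliver $f=o(n^{\max\{a,b\}+\varepsilon})$ in every case. No gap; the only care needed is the one you already flag, namely choosing the lemma's $\varepsilon$ afresh in each case.
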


\section{Proof of Theorem~\ref{T1}}
\label{S:proofT1}
\subsection{A time-continuous partial duplication graph}
It will be helpful to have a time-continuous version of $\mathcal G$. 

\begin{definition}[Partial duplication random graph
  PDt\label{def:PDt}]
  Let $p \in [0,1]$. We define the following random graph process --
  called \emph{time-continuous partial duplication random graph} or
  PDt graph -- $\mathcal G = (G_t)_t\geq 0$ with $G_t = (V_t, E_t)$,
  where $G_t$ is the graph at time $t$ with vertex set $V_t$ and
  (undirected) edge set $E_t \subseteq \{ \{v,w\}: v,w \in V_t, v\neq
  w\}$. Starting in some $G_0 = (V_0, E_0)$, every $v\in V_t$ gives
  rise at rate~$1 + 1/|V_t|$ to a duplication event. Upon such an
  event, a new node $v' \notin V_{t-}$ is created and every edge
  connected to~$v$ (i.e.\ every $e\in E_{t-}$ with $e = \{v,w\}$ for
  some $w\in V_{t-}$) is copied at time~$t$ with probability~$p$,
  i.e.\ $\{v',w\} \in E_t$ with probability~$p$, independently of all
  other edges.

  We define as in Definition~\ref{def:PDn} the degree distribution
  $F_k(t) := F_k(G_t)$ and $F_k^\circ(t) := F_k^\circ(G_t)$ and its
  probability generating function $H_q(t):=H_q(G_t)$ and $H_q^\circ(t) :=
  H_q^\circ(G_t)$.
\end{definition}

\begin{remark}[Connection between PDn and PDt]
  \begin{asparaenum}
  \item \sloppy We abuse notation here and use $(G_t)_{t\geq 0}$ for
    the time-continuous PDt graph while $(G_n)_{n=n_0,n_0+1,...}$ is
    the time-discrete PDn graph. Of course, these two processes are
    closely connected. Let $\tau_n := \inf\{t\geq 0: |V_t|=n\}$ Then,
    $(G_{\tau_n})_{n=n_0,n_0+1,...} \sim (G_{n})_{n=n_0,n_0+1,...}$
  \item The choice of the rate $1 + 1/|V_t|$ for initiating a
    duplication event seems unnatural. It will however turn out that
    this choice simplifies our line of argument; see the next
    proposition.
  \end{asparaenum}
\end{remark}

\noindent
We now derive an important relationship for $H^\circ_q(t)$.

\begin{proposition}[Evolution of $H_q^\circ(t)$\label{P:Hqt}]
  For $\mathcal G = (G_t)_{t\geq 0}$ and $H^\circ(t)$ as above,
  $$\frac{d}{dt} \mathbf E[H^\circ_q(t)]
   = \mathbf E\Big[-pq(1-q)\frac{d}{ds} H^\circ_s(t)\Big|_{s=q}
     + H^\circ_{1-p+pq}(t) - H^\circ_q(t)\Big].$$
\end{proposition}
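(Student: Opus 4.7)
The plan is to identify the right-hand side as the (expected) action of the infinitesimal generator of the PDt Markov process on the functional $G \mapsto H_q^\circ(G)$, leveraging the discrete increment formula \eqref{eq:P12} already established in Proposition~\ref{P:FHSC}. Writing $n := |V_t|$ and conditioning on $G_t$, the $n$ vertices each initiate duplications at independent Poisson rates $1+1/n$, so the total event rate is $n+1$; given that an event occurs, the selected vertex is uniform on $V_t$, which means the conditional law of the jump coincides with exactly one step of the PDn model.

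At a duplication event, $|V_t|$ jumps from $n$ to $n+1$ while $H_q(t)$ incurs an increment $\Delta$ whose conditional expectation, by \eqref{eq:P12}, equals
$$\mathbf E[\Delta \mid G_t, \text{event}] = -pq(1-q)\tfrac{d}{ds}H_s^\circ(t)\big|_{s=q} + H_{1-p+pq}^\circ(t).$$
A direct algebraic manipulation gives
$$H_q^\circ(t^+) - H_q^\circ(t) = \frac{H_q(t)+\Delta}{n+1} - \frac{H_q(t)}{n} = \frac{\Delta - H_q^\circ(t)}{n+1}.$$
Multiplying this conditional mean increment by the event rate $n+1$, the factor $1/(n+1)$ cancels cleanly against the rate, and the $n$-dependence disappears. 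This is precisely the point of choosing the per-vertex rate as $1+1/n$ rather than $1$ or $1/n$: the total event rate $n+1$ exactly absorbs the normalizing denominator of $H_q^\circ$, yielding an $n$-free expression. Taking outer expectations gives the claim via Dynkin's formula.

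The one technical step to verify is the legitimacy of interchanging the derivative with the expectation and of differentiating $H_s^\circ(t)$ inside the expectation. Since $H_s^\circ(t)$ is, for each realization of $G_t$, a polynomial in $s$ of degree $|V_t|-1$ taking values in $[0,1]$, its derivative at any $s=q<1$ is bounded in terms of $|V_t|$; combined with the fact that under PDt the vertex count grows at a rate linear in $|V_t|$, yielding finite moments of $|V_t|$ on any bounded time interval, dominated convergence applies and the formal generator computation is justified. The main (and essentially only) obstacle in writing out a rigorous proof is bookkeeping these integrability conditions; the core identity itself is a one-line cancellation once the rate $1+1/n$ is correctly combined with \eqref{eq:P12}.
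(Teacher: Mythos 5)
Your proposal is correct and follows essentially the same route as the paper: both condition on a duplication event, use the one-step formula \eqref{eq:P12} for the conditional expectation of the increment of $H_q$, and exploit the total event rate $|V_t|+1$ to cancel the renormalization from $H_q$ to $H_q^\circ$, leaving an $n$-free generator expression. Your explicit remark on justifying the interchange of differentiation and expectation is a small addition the paper leaves implicit, but the core identity and its derivation coincide.
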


\begin{remark}
  Later, it will be useful to define $x:=1-q$ and $\widetilde H_x(t)
  := H_{q}(t)$ in order to obtain
  \begin{align}\label{eq:wideH}
    \frac{d}{dt} \mathbf E[\widetilde H^\circ_x(t)] = \mathbf
    E\Big[px(1-x)\frac{d}{dx} \widetilde H^\circ_x(t) + \widetilde
    H^\circ_{px}(t) - \widetilde H^\circ_x(t)\Big].
  \end{align}
  In particular, note that the right hand side is reminiscent of
  \eqref{eq:genX}.
\end{remark}

\begin{proof}
  We have already seen the evolution of $n\mapsto\mathbf E[H_q(n)]$ in
  Proposition~\ref{P:FHSC}. From this, we derive, since the total rate
  for a duplication event at time $t$ is $|V_t|+1$,
  \begin{align*}
    \mathbf E[H_q(t+dt)] & = \mathbf E\Big[H_q(t)(1-(|V_t|+1)dt) \\ &
    \qquad + dt \cdot (|V_t|+1) \Big( H_q(t) -
    pq(1-q)\frac{d}{ds}H^\circ_s(t)\Big|_{s=q} + H^\circ_{1-q+pq}(t)\Big)\Big].
  \end{align*}
  From this, we obtain
  \begin{align*}
    \mathbf E[H^\circ_q(t+dt)] & = \mathbf
    E\Big[H^\circ_q(t)(1-(|V_t|+1)\cdot dt) \\ & \qquad + dt \cdot
    (|V_t|+1) \Big(\frac{|V_t|}{|V_t|+1} H^\circ_q(t) \\ & \qquad
    \qquad - \frac{1}{|V_t|+1} pq(1-q)\frac{d}{ds}H^\circ_s(t)\Big|_{s=q}
    + \frac{1}{|V_t|+1} H^\circ_{1-q+pq}(t)\Big)\Big] \\
    & = \mathbf E\Big[H^\circ_q(t) + dt \cdot \Big( -
    pq(1-q)\frac{d}{ds}H^\circ_s(t)\Big|_{s=q} +H^\circ_{1-q+pq} -
    H^\circ_q(t)\Big)\Big].
  \end{align*}
\end{proof}

\subsection{A duality relationship between $\mathcal X$ and $\mathcal G$}
\label{subs:duality}
Now, we make clear why we need the auxiliary process $\mathcal X$ from
Subsection~\ref{ss:mathcalX}. Here, we borrow ideas from the notion
of duality of Markov processes; see Chapter 4.4 in \cite{EK86}.

Recall that two Markov processes $\mathcal G = (G_t)_{t\geq 0}$ (which
will be the PDt-graph below) and $\mathcal X = (X_t)_{t\geq 0}$ (which
will be the piecewise-deterministic process from
Subsection~\ref{ss:mathcalX}) with state spaces $E$ and $E'$ are
called dual with respect to the function $H: E\times E' \to\mathbb R$
if
\begin{align}\label{eq:dual}
  \mathbf E[H(G_t, x) | G_0=g] = \mathbf E[H(g, X_t)|X_0=x]
\end{align}
for all $g\in E, x\in E'$. (In our application, $H$ will be the moment
generating function of the degree distribution of the PDt-graph
evaluated at $1-x$.) When one is interested in the process
$\mathcal G$, this relationship is most helpful if the process
$\mathcal X$ is easier to analyse than the process $\mathcal G$.
Moreover, frequently, the set of functions $\{H(.,x): x\in E'\}$ is
separating on $E$ such that the left hand side of~\eqref{eq:dual}
determines the distribution of $G_t$. In this case, the distribution
of the simpler process $\mathcal X$ determines via~\eqref{eq:dual} the
distribution of $\mathcal G$, so analysing $\mathcal G$ becomes
feasible. (In our application, however, $\{H(.,x): x\in E'\}$ is only
separating on the space of degree distributions and hence
\eqref{eq:dual} will determine the degree distribution of the
PDt-graph.)

There is no straight-forward way how to find dual processes, but they
arise frequently in the literature; see \cite{JansenKurt2014} for a
survey. Examples span reflected and absorbed Brownian motion,
interacting particle models such as the voter model and the contact
process, as well as branching processes.

\begin{proposition}[Duality\label{P:dual}]
  Let $\mathcal X = (X_t)_{t\geq 0}$ be a Markov process with state
  space $[0,1]$ with generator as given in \eqref{eq:genX} and
  $\widetilde H^\circ_x(t) := \sum_{k=0}^\infty F^\circ_k(t) (1-x)^k$
  as above. Then,
  \begin{align*}
    \mathbf E[\widetilde H^\circ_x(t)|G_0] = \mathbf E[\widetilde
    H^\circ_{X_t}(0)|X_0=x].
  \end{align*}
\end{proposition}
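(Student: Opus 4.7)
The plan is to invoke the standard generator-level duality for Markov processes (e.g.\ Theorem~4.4.11 of \cite{EK86}): if $G_{\mathcal G} H(\cdot, x)(g) = G_{\mathcal X} H(g, \cdot)(x)$ on a sufficiently rich class of duality functions $H$, then $\mathbf E[H(G_t, x) | G_0 = g] = \mathbf E[H(g, X_t) | X_0 = x]$. I would take
$$H(g, x) := \widetilde H^\circ_x(g) = \sum_{k=0}^\infty F_k^\circ(g)(1-x)^k,$$
which for any finite graph $g$ is a polynomial in $x$ of degree at most $\max_{v\in V_g} D_v(g)$ and hence lies in $\mathcal C^1_b([0,1])$, so the right-hand generator $G_{\mathcal X}$ of~\eqref{eq:genX} is well-defined on it.

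For the generator computation on the $\mathcal G$-side, I would use that each vertex triggers a duplication event at rate $1 + 1/|V_g|$, so that the total jump rate is $|V_g|+1$ and, conditional on a jump, the dynamics coincide with one PDn step. Combining this with Proposition~\ref{P:FHSC}, using $H_q = |V|\cdot H_q^\circ$ and dividing by $|V_g|+1$, a short calculation yields
$$G_{\mathcal G} H_q^\circ(\cdot)(g) = -pq(1-q)\tfrac{d}{ds}H_s^\circ(g)\big|_{s=q} + H_{1-p+pq}^\circ(g) - H_q^\circ(g).$$
Substituting $x = 1-q$ rewrites this as
$$G_{\mathcal G} \widetilde H^\circ_x(\cdot)(g) = px(1-x)\tfrac{d}{dx}\widetilde H^\circ_x(g) + \widetilde H^\circ_{px}(g) - \widetilde H^\circ_x(g),$$
which is exactly the expression one obtains when applying $G_{\mathcal X}$ to $x \mapsto \widetilde H^\circ_x(g)$. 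Thus the generator duality holds on the entire separating class $\{\widetilde H^\circ_x(\cdot) : x\in[0,1]\}$.

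The main step that requires care is the technical verification that allows one to upgrade this generator identity to the integrated statement. The usual route is to show that
$$s \mapsto \mathbf E\bigl[H(G_s, X_{t-s}) \,\big|\, G_0=g, X_0=x\bigr], \qquad s\in[0,t],$$
is constant by differentiating and using the generator identity, which reduces to an interchange-of-derivatives/dominated-convergence argument. Here $H$ is bounded by~$1$, $\mathcal X$ is the piecewise-deterministic process of Subsection~\ref{ss:mathcalX} (so its semigroup acts nicely on smooth functions of $x$), and $\mathcal G$ is a pure-jump process with only finitely many jumps in bounded time intervals almost surely, which provides the integrability needed to justify the interchange. With these ingredients in place, evaluating the constant at $s=0$ and $s=t$ yields the claimed duality.
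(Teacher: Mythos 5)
Your proposal is correct and takes essentially the same route as the paper: the graph-side generator computation you sketch is exactly Proposition~\ref{P:Hqt} (in the form \eqref{eq:wideH}), and the paper likewise concludes by combining \eqref{eq:wideH} with \eqref{eq:genX} to show that $s\mapsto\mathbf E[\widetilde H^\circ_{X_{t-s}}(s)]$ is constant and evaluating at $s=0$ and $s=t$. Your extra care about the interchange of differentiation and expectation merely spells out what the paper's one-line proof leaves implicit.
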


\begin{proof}
  On a probability space where $\mathcal X$ and the PDt-graph are
  independent, combining~\eqref{eq:wideH} and \eqref{eq:genX},
  \begin{align*}
    \frac{d}{ds}\mathbf E[\widetilde H^\circ_{X_{t-s}}(s)|G_0, X_0=x]
    = 0.
  \end{align*}
  The result then follows since $s\mapsto \mathbf E[\widetilde
  H^\circ_{X_{t-s}}(s)]$ is constant.
\end{proof}

\subsection{Proof of Theorem~\ref{T1}}
We start with the case $p\leq p^\ast$. Here, we know from
Lemma~\ref{l:X} that $X_t \xrightarrow{t\to\infty} 0$ almost
surely. Hence, using Proposition~\ref{P:dual}, for $q\in[0,1)$ and
$x:=1-q$,
\begin{align*}
  \lim_{n\to\infty}\mathbf E[H_q^\circ(n)] & = \lim_{t\to\infty}
  \mathbf E[H_q^\circ(t)] = \lim_{t\to\infty} \mathbf E[\widetilde
  H^\circ_{x}(t)] = \lim_{t\to\infty} \mathbf E[\widetilde
  H^\circ_{X_t}(0)|X_0=x] \\ & = \mathbf E[\widetilde H_0^\circ(0)] =
  \mathbf E[H_1^\circ(0)] = 1.
\end{align*}
In particular, since by~\eqref{P:FHSC}
\begin{align*}
  \mathbf E[H_0^\circ(n+1)|\mathcal F_n] &= 
                                           \mathbf E[F_0^\circ(n+1)|\mathcal F_n]
                                           = \frac n{n+1}F_0^\circ(n)+\frac1{n+1}\sum_{\ell\geq0}F_\ell^\circ(n)(1-p)^\ell
  \\ & \geq F_0^\circ(n)
       = H_0^\circ(n),
\end{align*}
$(H_0^\circ(n))_{n=0,1,2,...}$ is a bounded sub-martingale and thus
converges almost surely to 1.  By the monotonicity of the probability
generating function, we also obtain the stated uniform convergence
result.

The case $p>p^\ast$ can be treated similarly, but $X_t$ does not converge
almost surely to a constant. Hence, in this case with $X_\infty$ from
Lemma~\ref{l:X} we can compute
\begin{align*}
  \lim_{n\to\infty}\mathbf E[H_q^\circ(n)] &= \lim_{t\to\infty}\mathbf
  E[\widetilde H^\circ_{X_t}(0)|X_0=x]
  = \sum_{k=0}^\infty F^\circ_k(n_0)\mathbf E[(1-X_\infty)^k]\\
  &= \sum_{k=0}^\infty F^\circ_k(n_0)
  \sum_{\ell=0}^k \binom k \ell(-1)^\ell \mathbf E[X_\infty^\ell] \\
  &= \sum_{\ell=0}^\infty(-1)^\ell\mathbf E[X_\infty^\ell]
  \sum_{k=\ell}^\infty \binom k\ell F_k^\circ(n_0)\\
  &= 1 - \sum_{\ell=1}^\infty\frac{S_\ell^\circ(n_0)}{\ell!}
  (-1)^{\ell-1}\mathbf E[X_\infty^\ell].
\end{align*}
Now by Lemma~\ref{l:X}, the result follows.

\section{Proof of Theorems~\ref{T2} and \ref{T3}}
\label{S:proofT23}
\begin{proof}[Proof of Theorem~\ref{T2}]
  We start with 1. where we make use of Proposition~\ref{P:FHSC} and
  Lemma~\ref{l:konvergenz}. For the almost sure convergence in~\eqref{eq:T21},
  we use Lemma~\ref{l:konvergenz}.2 with $a=kp^{k-1}$, and for~\eqref{eq:T22},
  we use \eqref{eq:mar1}. We will set aside the $\mathcal L^2$ convergence for now.\\
  The proof of 2.\ is a bit more involved since the recursions from
  Proposition~\ref{P:FHSC} for $S_k$ involve both, $S_k$ and
  $S_{k-1}$. But considering the quantity
  $$Q_k(n) := \sum_{\ell=1}^ka_\ell S_\ell(n)$$ where, recalling that the empty product is~1,
  $$a_\ell:=\prod_{m=\ell}^{k-1}\frac{m(m+1)}{k-m+p^{k-1}-p^{m-1}},$$
  we obtain a fitting recursion as follows:
  \begin{align*}
    \mathbf E[Q_k(& n+1)|\mathcal F_n]
       =\sum_{\ell=1}^ka_\ell\Bigg(\Big(1+\frac{p\ell+p^\ell}n\Big)S_\ell(n)
                                     +\frac{p\ell(\ell-1)}nS_{\ell-1}(n)\Bigg)\\
      &=\Big(1+\frac{pk+p^k}n\Big)S_k(n)
        +\sum_{\ell=1}^{k-1}S_\ell(n)\Bigg(\Big(1+\frac{p\ell+p^\ell}n\Big)a_\ell
                                     +\frac{p\ell(\ell+1)}na_{\ell+1}\Bigg)\\
      &=\Big(1+\frac{pk+p^k}n\Big)S_k(n)
        +\sum_{\ell=1}^{k-1}a_\ell S_\ell(n)\Bigg(1+\frac{p\ell+p^\ell}n
                    +\frac{pk-p\ell+p^k-p^\ell}n\Bigg)\\
      &=\Big(1+\frac{pk+p^k}n\Big)Q_k(n).
  \end{align*}
  \sloppy Thus, by Lemma~\ref{l:konvergenz}.2 there are random
  variables $S_k(\infty)$ satisfying
  $n^{-(kp+p^k)}Q_k(n)\xrightarrow{n\to\infty}S_k(\infty)$ almost
  surely. Since $n^{\ell p+p^\ell}=o\big(n^{kp+p^k}\big)$ for all
  $\ell<k$ and~\eqref{eq:T22} provides the asymptotics of
  $S_1(n)=2C_2(n)$, inductively the almost sure convergence in
  \eqref{eq:T21b} follows.

  Now, writing $Q_1(n) = S_1(n)$ as well as $Q_2(n) = S_2(n) +
  \frac{2}{p}S_1(n)$, we have from Lemma~\ref{l:konvergenz}
  \begin{align*}
    \mathbf E[S_2(n)] & = \mathbf E[Q_2(n)] - \frac{2}{p}\mathbf
    E[Q_1(n)] \\ & = Q_2(n_0) \prod_{k=n_0}^{n-1} \frac{k + 2p + p^2}{k} -
    \frac{2}{p}Q_1(n_0)\prod_{k=n_0}^{n-1} \frac{k + 2p}{k}
  \end{align*}
  and \eqref{eq:T22b} follows.

  For the $\mathcal L^2$ convergece in~\eqref{eq:T21} first consider the number
  of pairs of $k$-cliques at time $n$, $\tfrac 12C_k(n)_{\downarrow 2}$.
  Now let $C_{k,\ell}(n)$ be the number of $\ell$-\emph{pairs} at time $n$, that is
  pairs of $k$-cliques which share exactly $\ell$ nodes. (e.g. two disjoint cliques
  form a 0-pair and a $(k-1)$-pair of $k$-cliques is a $(k+1)$-clique with an edge
  missing.) Thus, we obtain
  \begin{align}
    \tfrac 12C_k(n)_{\downarrow 2}
      = {C_k(n)\choose 2}
      = \sum_{\ell=0}^{k-1}C_{k,\ell}(n).\label{sum-pairs}
  \end{align}
  Supposing there is an $\ell$-pair of $k$-cliques at time $n$, there are four
  ways for new pairs to arise during the next time step:
\begin{enumerate}[1)]
  \item
    First of all, every new clique forms a $(k-1)$-pair with the clique it
    was duplicated from, since they only do not share the new node.
    As \eqref{eq:P14} shows, this happens $\tfrac{kp^{k-1}}nC_k(n)$ times
    on average during the next time step. In the next 3 cases we will
    ignore those events.\\
  \item
    One of the $2(k-\ell)$ not-shared nodes is chosen and the one clique of the
    pair it is contained in is duplicated. Then, since the new clique retains the
    $\ell$ nodes which are part of the non-duplicated clique of the pair, a new
    $\ell$-pair is formed.\\[0.5em]
    Corresponding probability: $\tfrac{2(k-\ell)}np^{k-1}$\\
  \item
    One of the $\ell$ shared nodes is chosen and both cliques of the pair
    are duplicated. Obviously, this way a new $\ell$-pair arises.
    Additionally the other two new pairs (one original and the copy of
    the other original respectively) are $(\ell-1)$-pairs, since those
    cliques do not share the new node.\\[0.5em]
    Corresponding probability: $\tfrac\ell np^{2k-\ell-1}$\\
  \item
    One of the $\ell$ shared nodes is chosen, but only one of the cliques
    is duplicated. Similarly to 3), a new $(\ell-1)$-pair arises.
    (Since the duplication of one clique fails, so does the creation
    of the new $\ell$-pair and one of the $(\ell-1)$-pairs.)\\[0.5em]
    Corresponding probability: $\tfrac\ell n\cdot 2p^{k-1}(1-p^{k-\ell})
                                =\tfrac\ell n 2p^{k-1}-\tfrac\ell n2p^{2k-\ell-1}$
\end{enumerate}
Following this, for $\ell\leq k-2$ we obtain
\begin{align}
  \mathbf E[&C_{k,\ell}(n+1)-C_{k,\ell}(n)\mid\mathcal F_n]\notag\\[0.5em]
   &= \frac{2(k-\ell)p^{k-1}+\ell p^{2k-\ell-1}}n C_{k,\ell}(n)
    + \frac{2(\ell+1) p^{k-1}}n C_{k,\ell+1}(n)\label{rec-pairs1}
  \intertext{and}
  \mathbf E[&C_{k,k-1}(n+1)-C_{k,k-1}(n)\mid\mathcal F_n]\notag\\[0.5em]
   &= \frac{2p^{k-1}+(k-1)p^k}n C_{k,k-1}(n)
    + \frac{kp^{k-1}}nC_k(n).\label{rec-pairs2}
\end{align}
Using \eqref{sum-pairs} we compute
\begin{align*}
  \mathbf E[&C_k(n+1)_{\downarrow 2}-C_k(n)_{\downarrow 2}\mid\mathcal F_n]
    = 2\sum_{\ell=0}^{k-1}\mathbf E[C_{k,\ell}(n+1)-C_{k,\ell}(n)\mid\mathcal F_n]\\
   &= \sum_{\ell=0}^{k-1}
        \frac{2(k-\ell)p^{k-1}+\ell p^{2k-\ell-1}}n 2C_{k,\ell}(n)
    + \sum_{\ell=1}^{k-1}
        \frac{2\ell p^{k-1}}n 2C_{k,\ell}(n)
    + \frac{2kp^{k-1}}nC_k(n)\\
   &= \frac{2kp^{k-1}}{n}C_k(n)_{\downarrow 2}
    + \frac 2n\sum_{\ell=0}^{k-1}\ell p^{2k-\ell-1}C_{k,\ell}(n)
    + \frac{2kp^{k-1}}nC_k(n)\\
   &= \frac{2kp^{k-1}}{n}C_k(n)^2
    + \frac{2p^k}n\sum_{\ell=1}^{k-1}\ell p^{k-1-\ell}C_{k,\ell}(n)
  \intertext{and thus}
  \mathbf E[&C_k(n+1)^2]\\
    &= \Big(1+\frac{2kp^{k-1}}{n}\Big)\mathbf E[C_k(n)^2]
     + \frac{2p^k}n\sum_{\ell=1}^{k-1}\ell p^{k-1-\ell}\mathbf E[C_{k,\ell}(n)]
     + \frac{kp^{k-1}}{n}\mathbf E[C_k(n)].
\end{align*}
Since $\mathbf E[C_k(n)]=O\big(n^{kp^{k-1}}\big)=O\big(n^{2kp^{k-1}-kp^{k-1}}\big)$,
for the use of Lemma \ref{l:recursions} it suffices to show the existence of
a $\delta>0$ holding $\sum_\ell\mathbf E[C_{k,\ell}(n)]=O\big(n^{2kp^{k-1}-\delta}\big)$.
From \eqref{rec-pairs2} it follows, that
\begin{align*}
  \mathbf E[C_{k,k-1}(n+1)]
   &= \Big(1+\frac{2p^{k-1}+(k-1)p^k}n\Big)\mathbf E[C_{k,k-1}(n)]
    + \frac 1n O\big(n^{kp^{k-1}}\big)\\
   &\overset{\ref{co:rec}}=O\Big(n^{\max\{2p^{k-1}+(k-1)p^k,kp^{k-1}\}+\varepsilon}\Big)
\end{align*}
for arbitrarily small $\varepsilon>0$. Using Corollary \ref{co:rec} again, inductively,
\eqref{rec-pairs1} implies
\[
  \mathbf E[C_{k,\ell}(n)]
    =O\Big(n^{\max\limits_{\ell\leq m\leq k}\big(2(k-m)p^{k-1}+m p^{2k-m-1}\big)+\bar\varepsilon}\Big)
\]
and hence
\begin{align}
  \sum_{\ell=1}^{k-1}\mathbf E[C_{k,\ell}(n)]
  =O\Big(n^{\max\limits_{1\leq m\leq k}\big(2(k-m)p^{k-1}+m p^{2k-m-1}\big)+\tilde\varepsilon}\Big)
  \label{cond:pairs-reclem}
\end{align}
for arbitrarily small $\tilde\varepsilon>0$. Since
\[
  \max\limits_{1\leq m\leq k}\Big(2(k-m)p^{k-1}+mp^{2k-m-1}\Big)
  \leq\max\limits_{1\leq m\leq k}(2k-m)p^{k-1}=2kp^{k-1}-p^{k-1},
\]
letting $\tilde\varepsilon=p^{k-1}/2$, \eqref{cond:pairs-reclem} satisfies
the conditions of Lemma \ref{l:recursions}.1, we finally obtain
$\mathbf E[C_k(n)^2]=O\big(n^{2kp^{k-1}}\big)$ and Lemma \ref{l:konvergenz}.3
applies.
\end{proof}

\begin{proof}[Proof of Theorem~\ref{T3}]
  For~\eqref{eq:T32}, we will show that
  \begin{align}\label{eq:T32c}
    \mathbf{P}(D_k(n)\leq \ell|D_k(n_0)=a) =
    \sum_{m=a}^\ell(-1)^{m-a}{\ell \choose m}{m-1\choose a-1}
    \prod_{j=n_0}^{n-1}\left(1-\frac{pm}{j}\right)
  \end{align}
  which implies~\eqref{eq:T32}.

  We fix $n_0, k$ and $a$ and set
  $$\Phi_\ell(n) := \mathbf P(D_k(n) \leq \ell| D_k(n_0)=a).$$
  We will prove \eqref{eq:T32c} by induction over $n$. For $n=n_0$, we
  have that $\Phi_\ell(n_0) = 1_{\ell\geq a}$. In addition, the right
  hand side of~\eqref{eq:T32c} gives for $n=n_0$
  \begin{align*}
    \sum_{m=a}^\ell(-1)^{m-a}{\ell \choose m}{m-1\choose a-1} & =
    \sum_{m}(-1)^{m-a}{(\ell-a)-(-a)\choose m}{-1 + m \choose -a+m} \\
    & = (-1)^{\ell-a}\binom{-1}{\ell-a} = \binom{\ell-a}{\ell-a} =
    1_{\ell\geq a}
  \end{align*}
  according to \cite{Riordan1968}, (8) and (ii) in Chapter~1. This
  shows
  that~\eqref{eq:T32c} holds for $n=n_0$ and all $\ell$. In order to
  apply induction, we get the recursion
  \begin{align*}
    \Phi_{\ell}(n+1) &= \Phi_{\ell}(n) - \frac{p\ell}{n} \mathbb{P}(D_k(n)=\ell | D_k(n_0)=a)\\
    &= \Phi_{\ell}(n) - \frac{p\ell}{n}\cdot \Big(\Phi_{\ell}(n)-
    \Phi_{\ell-1}(n)\Big)
  \end{align*}
  since $D_k$ increases by at most one in every time step.

  Assume that~\eqref{eq:T32c} holds for an $n$ for all $\ell$. Then,
  using the recursion, and the assumption for $n$,
  \begin{align*}
    \Phi_\ell(n+1) & = \sum_{m=a}^\ell(-1)^{m-a}{\ell \choose
      m}{m-1\choose a-1}
    \prod_{j=n_0}^{n-1}\left(1-\frac{pm}{j}\right) \\ & \qquad -
    \sum_{m=a}^{\ell}(-1)^{m-a}\frac{p\ell}{n} \Big({\ell \choose m} -
    {\ell-1 \choose m}\Big){m-1\choose a-1}
    \prod_{j=n_0}^{n-1}\left(1-\frac{pm}{j}\right) \\ & =
    \sum_{m=a}^\ell(-1)^{m-a}{\ell \choose m}{m-1\choose a-1}
    \prod_{j=n_0}^{n-1}\left(1-\frac{pm}{j}\right) \\ & \qquad \qquad
    - \frac{pm}{n} \cdot \sum_{m=a}^{\ell}(-1)^{m-a} {\ell \choose
      m}{m-1\choose a-1}
    \prod_{j=n_0}^{n-1}\left(1-\frac{pm}{j}\right) \\ & =
    \sum_{m=a}^\ell(-1)^{m-a}{\ell \choose m}{m-1\choose a-1}
    \prod_{j=n_0}^{n}\left(1-\frac{pm}{j}\right)
  \end{align*}
  and we are done.

  For \eqref{eq:T33}, we will use Lemma~\ref{l:konvergenz}. 
  We have that 
  \begin{align*}
    D_k(n+1) - D_k(n) = \begin{cases} 1, & \text{ with probability } p\frac{D_k(n)}{n},\\
      0, & \text{ with probability } 1 - p\frac{D_k(n)}{n} \end{cases}
  \end{align*}
  since $D_k$ increases by one iff one neighbor of $v_k$ and the
  respective edge are copied. Using Lemma~\ref{l:gamma} and that
  $D_k(n)\xrightarrow{n\to\infty}\infty$ we obtain for
  $r>-1\geq-D_k(n_0)$ that
  \begin{align*}
    D_k(n)^r\sim\frac{\Gamma(D_k(n)+r)}{\Gamma(D_k(n)}
  \end{align*}
  almost surely, where the right hand side satisfies
  \begin{align*}
    \mathbf E\Big[ & \frac{\Gamma(D_k(n+1)+r)}{\Gamma(D_k(n+1))}\Big|\mathcal F_n\Big]\\[1em]
     &= \frac{pD_k(n)}n\cdot\frac{\Gamma(D_k(n)+1+r)}{\Gamma(D_k(n)+1)}
       +\Big(1-\frac{pD_k(n)}n\Big)\cdot\frac{\Gamma(D_k(n)+r)}{\Gamma(D_k(n))}\\[1em]
     &= \frac{\Gamma(D_k(n)+r)}{\Gamma(D_k(n))}\cdot\Big(
         \frac{pD_k(n)}n\cdot\frac{D_k(n)+r}{D_k(n)}+1-\frac{pD_k(n)}n\Big)\\[1em]
     &= \frac{\Gamma(D_k(n)+r)}{\Gamma(D_k(n))}\cdot\Big(1+\frac{pr}n\Big).
  \end{align*}
  Thus, Lemma~\ref{l:konvergenz}.2 shows
  \begin{align*}
    n^{-rp}\frac{\Gamma(D_k(n)+r)}{\Gamma(D_k(n))}
     \sim \Big(n^{-p}D_k(n)\Big)^r
     \xrightarrow{n\to\infty}D_k(\infty)^r
  \end{align*}
  almost surely. Furthermore, Lemma~\ref{l:konvergenz}.1 gives us the
  $\mathcal L^r$-boundedness for $r>1$ we need for Lemma~\ref{l:konvergenz}.3.
  Hence, we obtain the $\mathcal L^r$-convergence of $n^{-p}D_k(n)$
  and~\eqref{eq:T34}. Lastly, Lemma~\ref{l:konvergenz}.1 also shows the
  convergence of $(n^{-p}D_k(n))^{-\frac12}$ to an integrable and thus
  finite random variable which delivers the almost sure positivity of
  $D_k(\infty)$.
\end{proof}

% \subsubsection*{Acknowledgments}
% xxx

%\bibliographystyle{chicago}
%\bibliography{herpf.bib}

\end{document}